\numberwithin{equation}{section}
\theoremstyle{plain}
\newcommand{\N}{{\mathbb N}}
\newcommand{\F}{{\mathcal F}}
\newcommand{\R}{{\mathbb R}}
\newcommand{\M}{{\mathcal M}}
\newtheorem{theorem}{Theorem}[section]
\newtheorem{proposition}{Proposition}[section]
\newtheorem{lemma}{Lemma}[section]
\newtheorem{remark}{Remark}[section]
\newtheorem{definition}{Definition}[section]
\begin{document}
	\title{Equilibrium States, Zero Temperature Limits and Entropy Continuity for Almost-Additive Potentials}
	
	\author{Jie CAO}
	\address[J. Cao]{Chern Institute of Mathematics and LPMC, Nankai University, Tianjin 300071, P. R. China.}
	\email{caojie@nankai.edu.cn}
	\maketitle	
	\begin{abstract}
		This paper is devoted to study the equilibrium states for almost-additive potentials defined over topologically mixing countable Markov shifts (that is a non-compact space) without the big images and preimages (BIP) property. Let $\F$ be an almost-additive and summable potential with bounded variation potential. We prove that there exists a unique equilibrium state $\mu_{t\F}$ for each $t>1$ and there exists an accumulation point $\mu_{\infty}$ for the family $(\mu_{t\F})_{t>1}$ as $t\to\infty$. We also obtain that the Gurevich pressure $P_{G}(t\F)$ is $C^1$ on $(1,\infty)$ and the Kolmogorov-Sinai entropy $h(\mu_{t\F})$ is continuous at $(1,\infty)$. As two consequences, we extend completely the results for the zero temperature limit [J. Stat. Phys. ,155 (2014),pp. 23–46] and entropy continuity at infinity [J. Stat. Phys., 126 (2007),pp. 315–324] beyond the finitely primitive case. We also extend the result [Trans. Amer. Math. Soc., 370 (2018), pp. 8451–8465] for almost-additive potentials. 
	\end{abstract}
	
	\section{Introduction}
	
	The thermodynamic formalism is a branch of the ergodic theory that studies
	existence, uniqueness and properties of equilibrium states. In this paper we study the equilibrium state associated with almost-additive potential $\F=\{f_n\}_{n=1}^{\infty}$ (see Definition \ref{def-almost-additive}) defined over countable Markov shifts (CMS), that is, measures that maximize the value 
	\begin{equation*}
		h(\mu) + \F_*(\mu),
	\end{equation*}
	where $h(\mu)$ is the measure entropy (is also called {\it Kolmogorov-Sinai entropy}) (see \cite[Chapter 4]{Wa1982}), and $\F_*(\mu):=\lim_{n\to\infty}\frac{1}{n}\int f_n\, d\mu$ is the {\it Lyapunov exponent} for $\mu$, provided that the limit exists. Assuming uniqueness of equilibrium states 
	$\mu_{t\F}$ for each scaled almost-additive potential $t\F$ with $t > 1$, we address three fundamental problems: the existence of accumulation points for the family $(\mu_{t\F})_{t>1}$, the zero temperature limit and entropy continuity at infinity. 
	
	If the potential $\F$ is additive (that is $f_n=S_n f$ is the Birkhoff sum of $f$). A fundamental result in thermodynamic formalism \cite{BS2003} establishes that equilibrium states exhibit at most one equilibrium state, with existence typically requiring stringent dynamical conditions. Traditional approaches often assume the incidence matrix satisfies strong regularity properties--notably finite primitiveness or, in topologically mixing shifts with BIP condition \cite{Sa2003}, which are equivalent to the existence of Gibbs states. These classical criteria effectively constrain the system's combinatorial complexity to ensure analytical tractability. In a significant advancement, Freire and Vargas \cite{FV2018} introduced a novel criterion that relaxes these prerequisites. By circumventing the finite primitivity assumption, their work extends equilibrium state existence theory to broader classes of symbolic dynamics, building upon earlier developments in \cite{MU2001,Sa1999}. 
	
	The thermodynamic formalism for almost-additive potentials over
	CMS was developed in \cite{IY2012}. In particular, the existence of Gibbs state was established in \cite[Theorem 4.1]{IY2012}. Later in \cite[Proposition 3.1]{IY2014}, Iommi and Yayama introduced suitable conditions that ensure that certain Gibbs state is actually equilibrium state (this is not always the case in non-compact setting, see \cite[p. 1757]{Sa2003}).
	
	Our main result for the equilibrium state is as follows:.
	\begin{theorem}\label{equilibrium state}
		Let $(\Sigma,\sigma)$ be a topologically mixing CMS and the potential $\F$ be almost-additive and summable with bounded variation. Then, for any $t > 1$ there is a unique equilibrium state $\mu_{t\F}$ associated to $t\F$. Also, there exists an accumulation point $\mu_{\infty}$ for the family $(\mu_{t\F})_{t>1}$ as $t\to\infty$.
	\end{theorem}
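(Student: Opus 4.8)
The plan is to prove the two assertions by separate arguments: the existence and uniqueness of $\mu_{t\F}$ by transplanting the non-BIP strategy of Freire and Vargas \cite{FV2018} into the almost-additive category, and the accumulation point $\mu_\infty$ by a tightness argument driven by summability. Fix $t>1$. Then $t\F=\{tf_n\}_{n\ge1}$ is again almost-additive, of bounded variation and, after discarding at most finitely many states, summable, so the Gurevich pressure $P_G(t\F)$ is finite; the strict inequality $t>1$ is exactly what makes the tail series controlling recurrence converge, and this is the mechanism that replaces the BIP/finite-primitivity hypothesis. I would then fix an exhaustion $\Sigma_1\subset\Sigma_2\subset\cdots$ of $\Sigma$ by finite topologically transitive Markov subshifts with $P_G(t\F|_{\Sigma_k})\uparrow P_G(t\F)$ (Sarig's approximation of the Gurevich pressure by finite subsystems, as in \cite{MU2001,FV2018}). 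On each compact $\Sigma_k$ the restriction $t\F|_{\Sigma_k}$ is almost-additive with bounded variation, so the classical thermodynamic formalism on compact shifts (cf.\ \cite{IY2012}) provides a unique equilibrium state $\mu_{t\F}^{(k)}$ enjoying a Gibbs property relative to $\Sigma_k$.

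The core of the existence proof is then the following two points. First, using the Gibbs estimates on $\Sigma_k$ together with summability and the $t>1$ gap, one shows that $\mu_{t\F}^{(k)}([a])$ is dominated, \emph{uniformly in $k$}, by a summable sequence indexed by the states $a$; hence $(\mu_{t\F}^{(k)})_k$ is tight and, along a subsequence, converges weak$^*$ to a $\sigma$-invariant probability measure $\mu_{t\F}$ with no loss of mass. Second, one checks that the functional $\nu\mapsto h(\nu)+t\F_*(\nu)$ does not drop in this limit: one inequality is the variational principle, and for the reverse one passes the potential integrals through the limit (using bounded variation and the absence of escape of mass) and uses that, again because no mass escapes, $\limsup_k h(\mu_{t\F}^{(k)})\le h(\mu_{t\F})$. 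Together these give $h(\mu_{t\F})+t\F_*(\mu_{t\F})=P_G(t\F)$, so $\mu_{t\F}$ is an equilibrium state. For uniqueness I would adapt the Buzzi--Sarig argument \cite{BS2003}: since $h$ and $\F_*$ are affine under the ergodic decomposition (the integrability conditions of \cite{IY2014} ensuring affinity of the latter), the ergodic decomposition of any equilibrium state is carried by ergodic equilibrium states, so it suffices to show there is at most one of those; inducing on a cylinder turns such a measure into an equilibrium state for an induced almost-additive potential over a countable full shift, where uniqueness of the Gibbs state --- hence of the equilibrium state --- is classical \cite{Sa1999,Sa2003,IY2012}.

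For the accumulation point, put $\alpha:=\sup_\nu\F_*(\nu)$, which is finite by summability. From $h(\mu_{t\F})\ge0$ and the variational principle, $\F_*(\mu_{t\F})\le P_G(t\F)/t$; since $t\mapsto P_G(t\F)$ is convex with $P_G(t\F)/t\downarrow\alpha$ as $t\to\infty$, and comparison with a fixed invariant measure bounds $\F_*(\mu_{t\F})$ from below, the Lyapunov exponents $\F_*(\mu_{t\F})$ remain confined to a compact interval. The decisive step is to upgrade the mass estimates above so that they are also \emph{uniform in $t$}: for all but finitely many states $a$, the mass $\mu_{t\F}([a])$ should be bounded by a $t$-independent summable sequence (here both the summability of $\F$ and the restriction $t>1$ are used). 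This tightness of $(\mu_{t\F})_{t>1}$ then yields a weak$^*$ accumulation point $\mu_\infty$ as $t\to\infty$, which is an invariant probability measure.

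The main obstacle, present at every stage, is the failure in the absence of BIP of the three conveniences of the compact theory: the transfer operator need not be quasi-compact, Gibbs states on $\Sigma$ itself need not exist, and entropy is not upper semicontinuous on the whole simplex of invariant measures. Everything hinges on producing Gibbs-type mass bounds on the compact subsystems $\Sigma_k$ that are uniform in $k$ (to secure $\mu_{t\F}$) and, simultaneously, uniform in $t$ (to secure $\mu_\infty$), and on transferring the maximality of $h+t\F_*$ through a weak$^*$ limit despite this loss of semicontinuity; it is precisely here that almost-additivity --- rather than mere subadditivity --- and the bounded-variation hypothesis are indispensable, through the bounded-distortion control they impose on the partial sums $f_n$.
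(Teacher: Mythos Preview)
Your proposal follows essentially the same route as the paper: compact exhaustion $\Sigma_k\uparrow\Sigma$, Gibbs equilibrium states $\mu_{t,k}$ on each $\Sigma_k$, tightness from summability, upper semicontinuity of $\F_*$ and of $h$ along the resulting weak$^*$ convergent subsequence, and then a $t$-uniform cylinder bound to produce $\mu_\infty$. Two points deserve sharpening.

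First, the justification ``because no mass escapes, $\limsup_k h(\mu_{t,k})\le h(\mu_t)$'' is not sufficient on its own: over an infinite alphabet entropy is \emph{not} upper semicontinuous along an arbitrary tight sequence in $\M$. What the paper actually does (Lemma~\ref{entropy-continuous-lemma}) is use the Gibbs cylinder bound $\mu_{t,k}([w])\le C_{t,n}\exp(t\sup f_1|_{[j(w)]})$, uniform in $k$, together with \eqref{eqsum} and \eqref{eqsumt} to truncate the entropy sum $H_{\mu_{t,k}}(\xi^n)$ to finitely many cylinders with a controllable tail. You do invoke exactly this mechanism later (``Gibbs-type mass bounds \dots\ bounded-distortion control''), so make sure it is this quantitative estimate, and not mere absence of escape of mass, that carries the entropy step.

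Second, for uniqueness you outline the Buzzi--Sarig route via ergodic decomposition and inducing; the paper instead verifies $h(\mu_t),\,|\F_*(\mu_t)|<\infty$ directly from the same summability estimates and invokes the Mauldin--Urba\'nski uniqueness theorem \cite[Theorem~3.5]{MU2001}. Both arguments are valid; the paper's is shorter here because the required finiteness is already a byproduct of the tightness computation, while your inducing argument would need an additional check that the induced almost-additive potential still enjoys the regularity needed for the full-shift Gibbs theory.
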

	As previously established, our methodological advancement resides in circumventing the necessity of Gibbs state across the entire space (the space without BIP property). Instead, we employ the condition that the almost-additive potential $\F$ exhibits summability with bounded variation, thereby ensuring the existence of equilibrium states $\mu_{t\F}$ for each parameter $t > 1$. It is essential to emphasize that while we demonstrate the existence of accumulation points for the sequence $(\mu_{t\F})_{t>1}$, this does not inherently guarantee the existence of the limit $\lim_{t \to \infty}\mu_{t\F}$. To illustrate this subtlety, consider the simpler setting of compact subshifts of finite type with H\"older continuous potentials: Hochman and Chazottes \cite{CH2010} constructed an example where such convergence fails. This counterexample underscores the inherent complexity of equilibrium state behavior even in well-structured dynamical systems. However, under certain finite range assumptions convergence has been proved in \cite{BLMV2023,Br2003,CGU2011,Le2005}.
	
	The other major result establishes that when the almost-additive potential $\F$ is summable with bounded variation, its Gurevich pressure maintains $C^1$-property, equilibrium entropy and Lyapunov exponent are continuous: 
	\begin{theorem}\label{relatived-pressure-diff}
		Let $\mu_{t\F}$ be the equilibrium state for $t\F\,(t>1)$ and write 
		\begin{equation*}
			P(t):=P_{G}(t\F);\ \ \ L(t):=\F_*(\mu_{t\F});\ \ \ 	H(t):=h(\mu_{t\F}),
		\end{equation*}
		where $P_G(\F)$ is the {\it Gurevich pressure} of $\F$ (see Definition \ref{def-pre}).
		Then
		
		(1) $P$ is $C^1$ and strictly convex on $(1,\infty)$. Moreover,
		$
		P'(t)=L(t).
		$
		
		(2) $-L,\,H$ are continuous and strictly decreasing on $(1,\infty)$. Moreover, we have the weak limit $\lim_{t\to s}\mu_{t\F}=\mu_{s\F}$ for any $s\in(1,\infty)$.
	\end{theorem}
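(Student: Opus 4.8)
The plan is to deduce everything from the variational principle $P(t)=\sup_{\mu}\bigl(h(\mu)+t\F_*(\mu)\bigr)$ for the Gurevich pressure, combined with the uniqueness in Theorem~\ref{equilibrium state}. For fixed $\mu$ the map $t\mapsto h(\mu)+t\F_*(\mu)$ is affine, so $P$ is convex; summability forces $P(t)<\infty$ for every $t>1$, hence $P$ is finite and continuous on $(1,\infty)$. Inserting the equilibrium state $\mu_{s\F}$ into the variational principle gives $P(t)\ge h(\mu_{s\F})+t\F_*(\mu_{s\F})$ for all $t$, with equality at $t=s$, so $t\mapsto h(\mu_{s\F})+tL(s)$ is a supporting line of $P$ at $s$ and $L(s)\in\partial P(s)$. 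Comparing the equilibrium states for $t\F$ and $(t+h)\F$ in the variational principle yields, for $h>0$,
\[
 L(t)\ \le\ \frac{P(t+h)-P(t)}{h}\ \le\ L(t+h),
\]
and the reversed chain for $h<0$. In particular $L$ is non-decreasing, and letting $h\to 0$ (using the one-sided continuity of $P'_{\pm}$ for convex $P$) gives $P'_{+}(t)=L(t^{+})$ and $P'_{-}(t)=L(t^{-})$.

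Consequently $P$ is differentiable at $t$ if and only if the monotone function $L$ is continuous at $t$, and then $P'(t)=L(t)$. To rule out a jump, say $L(t_0)<L(t_0^{+})$ (the case $L(t_0^{-})<L(t_0)$ being symmetric), pick $t_n\downarrow t_0$; arguing as in the proof of Theorem~\ref{equilibrium state} I would show that $(\mu_{t_n\F})_n$ is tight, that any weak-$*$ accumulation point $\nu$ is $\sigma$-invariant with
\[
 h(\nu)+t_0\F_*(\nu)\ \ge\ \limsup_{n}\Bigl(h(\mu_{t_n\F})+t_n\F_*(\mu_{t_n\F})-(t_n-t_0)L(t_n)\Bigr)\ =\ P(t_0),
\]
hence an equilibrium state for $t_0\F$, and that $\F_*(\mu_{t_n\F})\to\F_*(\nu)$. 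Uniqueness gives $\nu=\mu_{t_0\F}$, so $L(t_0^{+})=\lim_n L(t_n)=\F_*(\nu)=L(t_0)$, a contradiction; thus $L$ is continuous on $(1,\infty)$, $P\in C^1(1,\infty)$ and $P'=L$. Since $L=P'$ is non-decreasing, strict convexity reduces to excluding $L\equiv c$ on some $[t_1,t_2]$: if that held, $P$ would be affine of slope $c$ there and $\mu_{t_1\F}$ would be the equilibrium state for $t\F$ for every $t\in[t_1,t_2]$, which would force $(t_2-t_1)\F$ to be cohomologous to a constant — incompatible with the summability of $\F$.

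For Part (2), $-L=-P'$ is then continuous and strictly decreasing. The entropy $H(t)=P(t)-tP'(t)$ is continuous since $P\in C^1$, and for $t_1<t_2$,
\[
 H(t_2)-H(t_1)=\int_{t_1}^{t_2}\!\bigl(P'(s)-P'(t_2)\bigr)\,ds+t_1\bigl(P'(t_1)-P'(t_2)\bigr)<0
\]
because $P'$ is strictly increasing and $t_1>1$; hence $H$ is strictly decreasing. Finally, for $s\in(1,\infty)$ any weak-$*$ accumulation point of $(\mu_{t\F})$ as $t\to s$ is, by the same tightness and upper-semicontinuity argument (now applied from both sides), an equilibrium state for $s\F$, hence equals $\mu_{s\F}$ by uniqueness; therefore $\mu_{t\F}\to\mu_{s\F}$ weakly.

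The analytic skeleton — convexity, the difference-quotient sandwich, the reduction of differentiability to continuity of $L$, and the elementary monotonicity computations — is routine. The genuine difficulty is non-compactness: one must show that weak-$*$ limits of the equilibrium states $\mu_{t\F}$ are again equilibrium states for the limiting potential, and that the Lyapunov exponents converge along such limits. This requires a tightness/no-escape-of-mass estimate (from summability and the local boundedness of $P$) and an upper-semicontinuity estimate for $\mu\mapsto h(\mu)+t\F_*(\mu)$ together with uniform integrability for $\F_*$ (from bounded variation, which also makes $\tfrac1n f_n$ converge uniformly). These are exactly the ingredients built for the $t\to\infty$ regime in Theorem~\ref{equilibrium state}; the plan is to reuse and localise them at finite $s$.
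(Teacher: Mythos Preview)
Your skeleton is essentially the paper's: the difference-quotient sandwich \eqref{relatived-diff}, tightness and upper semicontinuity of $h$ and $\F_*$ to show that weak limits of $(\mu_{t\F})$ are again equilibrium states, uniqueness to pin the limit as $\mu_{s\F}$, and then $P'=L$ with $H=P-tL$. The organisation differs only cosmetically: instead of your jump-in-$L$ contradiction, the paper first proves $\mu_{t\F}\to\mu_{s\F}$ (Lemma~\ref{upper-Lyap}) and then uses upper semicontinuity of \emph{both} $H$ and $L$ together with the identity $P=H+tL$ to bootstrap the two $\limsup$ inequalities to full continuity. Your integral computation for the monotonicity of $H$ and the paper's mean-value-theorem version are interchangeable.

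The genuine gap is in your strict-convexity step. You assert that if $\mu_{t_1\F}$ is the equilibrium state for every $t\in[t_1,t_2]$ then ``$(t_2-t_1)\F$ is cohomologous to a constant''. That implication is standard for \emph{additive} potentials in the presence of a \emph{global Gibbs measure} (compare the two Gibbs inequalities for the same $\mu$), but here the potential is only almost-additive and, crucially, no global Gibbs measure is available because BIP is explicitly not assumed --- this is exactly the regime the paper treats. You have not said what ``cohomologous to a constant'' means for an almost-additive sequence, nor how it would follow from a common equilibrium state without the Gibbs property on all of $\Sigma$; the compact approximations $\Sigma_k$ do carry Gibbs measures, but $\mu_{t_1,k}\ne\mu_{t_2,k}$ there, so the comparison does not transfer. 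The paper avoids cohomology entirely: after deriving $P(t_2)=H(t_1)+t_2L(t_1)$ it observes that $\mu_{t_1}$ is an equilibrium state for $t_2\F$ and appeals directly to the uniqueness in Theorem~\ref{equilibrium state}. A minor side issue: your closing remark that bounded variation ``makes $\tfrac1n f_n$ converge uniformly'' is not correct here, since summability forces $\sup f_1|_{[i]}\to-\infty$ and each $f_n$ is unbounded below on $\Sigma$; only the cylinder-wise oscillation bound $C_{bv}(\F)$ is available, and that (not any uniform convergence) is what drives Lemma~\ref{lema_lim_ut}.
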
 
	The regularity properties of the pressure are related to phase transitions. A cornerstone result by Ruelle \cite{Ru1968,Ru1978} establishes that for any topologically mixing finite Markov shift, the topological pressure function exhibits real-analytic behavior over the space of H\"older continuous potentials. Within the context of ferromagnetism, this analytical regularity has been interpreted as indicative of ``absence of phase transitions" (see \cite{El1985}). However, this elegant theoretical framework encounters fundamental limitations when extended to systems with countably states. There are examples include the Manneville–Pomeau map (see e.g. \cite{PM1980,Lo1993}) and the Farey map \cite{PS1992} (see also \cite{LSV1999}). Sarig \cite{Sa2003} studied the analyticity of the topological pressure for some one-parameter families of potentials on countable Markov shifts (see also \cite{MU2001,SU2003}).
	
	In statistical mechanics an important problem is that of describing how do equilibrium states varies as the temperature changes. Of particular importance is the case when the temperature decreases to zero (zero temperature limit). Indeed, this case is related to ground states, that is, measures supported on configurations of minimal energy \cite[Appendix B.2]{EFS1993}. A similar problem, given any observable $f:\Sigma\to\R$ we say that a $\sigma$-invariant measure $\mu$ is a {\it maximising} measure for $f$ if
	\begin{equation*}
		\int f\, d\mu=\sup\left\{\int f \, d\nu:\nu\in\M\right\},
	\end{equation*}
	where $\M$ denotes the set of $\sigma$-invariant probability measures on $\Sigma$. In certain cases, some maximising measures can be described as the limit of equilibrium states as the temperature goes to zero. The theory that studies maximising measures is usually called {\it ergodic optimisation} (see \cite{Bo2000,Je2019} for more details).
	
	We also study a similar problem for the almost-additive potentials and obtain that the continuity of Lyapunov exponent $L(t)$ at $t\to\infty$:
	\begin{theorem}\label{entropy-continuous}
		Under the hypotheses of Theorem \ref{equilibrium state}. Then
		\begin{equation*}
			\lim_{t\to\infty}\F_*(\mu_{t\F})=\F_*(\mu_{\infty})=\sup\left\{\F_*(\nu):\nu\in\M\right\}.
		\end{equation*}
		As a consequence, $\mu_{\infty}$ is a maximising measure for $\F$.
	\end{theorem}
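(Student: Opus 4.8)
The plan is to combine the variational principle for almost-additive potentials over countable Markov shifts (\cite{IY2012}) with the monotonicity and continuity already recorded in Theorems \ref{equilibrium state} and \ref{relatived-pressure-diff}. Throughout write $\alpha:=\sup\{\F_*(\nu):\nu\in\M\}$, and recall from Theorem \ref{relatived-pressure-diff} that $L(t)=\F_*(\mu_{t\F})=P'(t)$ is real-valued and strictly increasing on $(1,\infty)$, while $H(t)=h(\mu_{t\F})$ is real-valued and strictly decreasing there. First I would observe that $\mu_{t\F}\in\M$ gives $L(t)\le\alpha$, and that monotonicity of $L$ makes $L_\infty:=\lim_{t\to\infty}L(t)$ exist; fixing any $t_0>1$ we get $\alpha\ge L(t_0)>-\infty$, so $L_\infty\in(-\infty,\alpha]$. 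Likewise $\F_*(\mu_\infty)\le\alpha$ because $\mu_\infty\in\M$.

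The core of the argument is the reverse inequality $L_\infty\ge\alpha$. Since $\F$ is summable, $P_G(\F)<\infty$, so the variational principle gives $h(\nu)\le P_G(\F)-\F_*(\nu)$ for every $\nu\in\M$; in particular $\F_*(\nu)>-\infty$ forces $h(\nu)<\infty$. For such a $\nu$, applying the variational principle to the scaled potential $t\F$ yields
\[
P(t)=H(t)+tL(t)\ \ge\ h(\nu)+t\,\F_*(\nu),
\]
so that
\[
L(t)\ \ge\ \F_*(\nu)+\frac{h(\nu)-H(t)}{t}.
\]
Because $H$ is decreasing we have $0\le H(t)\le H(t_0)<\infty$ for $t\ge t_0$, hence the fraction tends to $0$ and $L_\infty\ge\F_*(\nu)$. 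Taking the supremum over all $\nu$ with $\F_*(\nu)>-\infty$ --- which does not change $\alpha$, since $\alpha>-\infty$ --- gives $L_\infty\ge\alpha$. Together with the first step this proves $\lim_{t\to\infty}\F_*(\mu_{t\F})=\alpha$.

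It remains to identify $\F_*(\mu_\infty)$ with $\alpha$. Choose $t_k\to\infty$ with $\mu_{t_k\F}\to\mu_\infty$ in the weak topology, as provided by Theorem \ref{equilibrium state}. By almost-additivity there is $C>0$ with, for every $\nu\in\M$, $\F_*(\nu)=\inf_{n\ge1}\frac1n\big(\int f_n\,d\nu+C\big)$ (Fekete's lemma applied to the almost-subadditive sequence $n\mapsto\int f_n\,d\nu$). Summability bounds $f_1$ from above, and then almost-additivity bounds every $f_n$ from above; since the $f_n$ are also continuous, each map $\nu\mapsto\int f_n\,d\nu$ is upper semicontinuous on the space of invariant probability measures, and hence so is the infimum $\F_*$. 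Therefore
\[
\F_*(\mu_\infty)\ \ge\ \limsup_{k\to\infty}\F_*(\mu_{t_k\F})\ =\ L_\infty\ =\ \alpha,
\]
and combined with $\F_*(\mu_\infty)\le\alpha$ we conclude $\F_*(\mu_\infty)=\alpha=\sup\{\F_*(\nu):\nu\in\M\}$, i.e.\ $\mu_\infty$ is a maximising measure for $\F$. I expect the delicate point to be this upper semicontinuity of $\F_*$: it hinges on $\mu_\infty$ being a genuine $\sigma$-invariant probability measure with no loss of mass along $(\mu_{t_k\F})_k$ --- exactly the content of the accumulation assertion in Theorem \ref{equilibrium state} --- after which the portmanteau inequality $\liminf_k\int g\,d\mu_{t_k\F}\ge\int g\,d\mu_\infty$ for nonnegative lower semicontinuous $g$ (applied to $g=M_n-f_n$ with $M_n:=\sup f_n$) makes the maps $\int f_n\,d\cdot$, and hence $\F_*$, upper semicontinuous.
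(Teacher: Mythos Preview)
Your argument is correct and follows essentially the same route as the paper. Both proofs combine the variational principle (Proposition~\ref{IY-variational principle}) with the upper semicontinuity of $\F_*$ to sandwich $\alpha$, $\F_*(\mu_\infty)$ and $\lim_{t\to\infty}L(t)$ between one another; the paper cites Lemma~\ref{lema_lim_ut} for the upper semicontinuity step that you re-derive via portmanteau, and it obtains $\alpha\le\limsup_{t\to\infty}L(t)$ from $\F_*(\mu)\le P(t)/t-h(\mu)/t$ together with $P'(t)=L(t)$, whereas you rearrange the same inequality as $L(t)\ge\F_*(\nu)+(h(\nu)-H(t))/t$ and use the boundedness of $H$---a slightly cleaner variant of the same computation.
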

	The above theorem extends the results of Iommi and Yayama \cite{IY2014}, while employing a distinct proof methodology. Furthermore, we emphasize that our conclusions do not require the space $(\Sigma,\sigma)$ to satisfy the BIP property, and the conditions imposed on the potential $\F$ are significantly more general. A similar problem, in the case of additive potential, was first studied by Jenkinson, Mauldin and Urban\'ski \cite{JMU2005} (see also \cite{BG2010,BF2014,Io2007,Ke2011}).
	
	For the continuity of equilibrium entropy at infinity, under some appropriate conditions, Morris \cite{Mo2007}, Freire and Vargas \cite{FV2018} proved the existence of the limit $\lim_{t \to \infty} h(\mu_{t\F})$ with the additive potential $\F=\{S_nf\}_{n=1}^{\infty}$. Also, they showed that this limit agree with the supremum of the entropies over the set of the maximizing measures for $f$. 
	
	We are able to generalize this result to almost-additive potentials:
	\begin{theorem}\label{zero-continuous}
		Under the hypotheses of Theorem \ref{equilibrium state}. Then
		\begin{equation*}
			\lim_{t \to \infty}h(\mu_{t\F})=h(\mu_{\infty}) = \sup\{h(\mu):\mu \in \mathcal{M}_{max}(\F)\},
		\end{equation*}
		where $\mathcal{M}_{\max}(\F)$ denotes the set of maximizing measures. 
	\end{theorem}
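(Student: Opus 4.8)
The plan is to prove Theorem~\ref{zero-continuous} by combining the accumulation point $\mu_\infty$ from Theorem~\ref{equilibrium state}, the variational/monotonicity machinery of Theorem~\ref{relatived-pressure-diff}, and the maximising property from Theorem~\ref{entropy-continuous}. The key observation is that along the family $(\mu_{t\F})_{t>1}$ we have the variational identity $P(t)=h(\mu_{t\F})+t\,\F_*(\mu_{t\F})=H(t)+tL(t)$, so that $H(t)=P(t)-tL(t)$. Since $P$ is $C^1$ with $P'(t)=L(t)$ by Theorem~\ref{relatived-pressure-diff}(1), this gives $H(t)=P(t)-tP'(t)$, which is the Legendre-type relation; differentiating (where legitimate) shows $H'(t)=-tP''(t)\le 0$, consistent with $H$ being decreasing. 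The first step is therefore to establish that $H(t)$ has a limit as $t\to\infty$: $H$ is decreasing and bounded below by $0$ (entropy is nonnegative), hence $\lim_{t\to\infty}H(t)=:h_\infty$ exists in $[0,\infty)$.

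Second, I would identify $h_\infty$ with $h(\mu_\infty)$. Take a sequence $t_k\to\infty$ with $\mu_{t_k\F}\to\mu_\infty$ weakly (the accumulation point from Theorem~\ref{equilibrium state}). Upper semicontinuity of entropy — which in the countable Markov shift setting holds under the summability/tightness conditions already in force (one argues via the Abramov–Katok type bounds or the fact that the family is tight, a point that must be checked carefully) — gives $h(\mu_\infty)\ge\limsup_k h(\mu_{t_k\F})=h_\infty$. For the reverse inequality, I would use that each $\mu_{t\F}$ is the \emph{equilibrium} state and hence maximises $h(\nu)+t\F_*(\nu)$; plugging in $\mu_\infty$ gives $h(\mu_{t\F})+t\F_*(\mu_{t\F})\ge h(\mu_\infty)+t\F_*(\mu_\infty)$, i.e. $H(t)\ge h(\mu_\infty)+t(\F_*(\mu_\infty)-L(t))$. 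By Theorem~\ref{entropy-continuous}, $\F_*(\mu_\infty)=\sup_\nu\F_*(\nu)\ge L(t)$, so the correction term is $\ge 0$ and $H(t)\ge h(\mu_\infty)$ for all $t>1$; letting $t\to\infty$ yields $h_\infty\ge h(\mu_\infty)$. Combining, $h_\infty=h(\mu_\infty)$.

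Third, I must show $h(\mu_\infty)=\sup\{h(\mu):\mu\in\M_{\max}(\F)\}$. That $\mu_\infty\in\M_{\max}(\F)$ is exactly the last assertion of Theorem~\ref{entropy-continuous}, so $h(\mu_\infty)\le\sup\{h(\mu):\mu\in\M_{\max}(\F)\}$. For the opposite inequality, let $\mu\in\M_{\max}(\F)$, so $\F_*(\mu)=\sup_\nu\F_*(\nu)=\F_*(\mu_\infty)$. Using that $\mu_{t\F}$ is the equilibrium state tested against $\mu$: $H(t)+tL(t)=P(t)\ge h(\mu)+t\F_*(\mu)$, hence $H(t)\ge h(\mu)+t(\F_*(\mu)-L(t))\ge h(\mu)$ since $\F_*(\mu)=\F_*(\mu_\infty)\ge L(t)$. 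Letting $t\to\infty$ gives $h(\mu_\infty)=h_\infty\ge h(\mu)$, and taking the supremum over $\mu\in\M_{\max}(\F)$ finishes the proof.

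**The main obstacle** I anticipate is the upper semicontinuity of entropy along the weakly convergent subsequence on the non-compact space $\Sigma$: in general $h(\cdot)$ fails to be upper semicontinuous on countable Markov shifts, so one cannot blindly pass the limit. I expect this to be handled using the summability of $\F$ together with the fact (which the earlier parts of the paper presumably establish) that the family $(\mu_{t\F})_{t>1}$ is \emph{tight} and that the "tails" of the partition are controlled uniformly in $t$ — precisely the kind of estimate that powers the existence of the accumulation point $\mu_\infty$ in Theorem~\ref{equilibrium state}. If instead the paper has already recorded a statement that $\nu\mapsto h(\nu)$ is upper semicontinuous on the relevant tight class, or that $H(t)\to h(\mu_\infty)$ directly, that would short-circuit this step; absent that, rendering the tightness argument rigorous is where the real work lies. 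The remaining inequalities are soft consequences of the equilibrium (variational) property and Theorem~\ref{entropy-continuous}, so I do not expect difficulty there.
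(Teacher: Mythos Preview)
Your proposal is correct and follows essentially the same route as the paper's own proof. The paper combines the strict monotonicity of $H$ from Theorem~\ref{relatived-pressure-diff}(2), the upper semicontinuity of entropy along $(\mu_{t\F})$ (this is exactly Lemma~\ref{entropy-continuous-lemma}, equation~\eqref{upper-semicontinuous}, so your anticipated obstacle is already dispatched earlier in the paper), and the variational inequality $h(\mu)\le P(t)-t\F_*(\mu)\le H(t)$ for $\mu\in\M_{\max}(\F)$ coming from Theorem~\ref{entropy-continuous}; the only cosmetic difference is that the paper packages the argument into two chained inequalities \eqref{zero-1}--\eqref{zero-2} rather than your three-step layout, but the content is identical.
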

	\begin{remark}
		This answers two questions posed by Jenkinson,Mauldin and Urba\'nski \cite{JMU2005} for almost-additive potentials.
	\end{remark}
	
	The paper is organized as follows. In Sect. \ref{Sec-preliminaries}, we give the basic definitions and notations that we use in the proofs and results. In Sect. \ref{Sec-compact-app}, we construct the compact subshifts approximation to prove our results. In Sect. \ref{Sec-proof-Theorem-1}, we use the existence of the equilibrium states on the compact case to show the existence of a unique equilibrium state in the non-compact case, therefore proving Theorem \ref{equilibrium state}. In Sect. \ref{Sec-proof-Theorem-234}, we establish Theorem \ref{relatived-pressure-diff} by leveraging the uniqueness of equilibrium states, and subsequently demonstrate Theorems \ref{entropy-continuous} and \ref{zero-continuous} as direct applications of this foundational result.
	
	\section{Preliminaries}\label{Sec-preliminaries}
	Let $\mathcal A$ be a countable alphabet set and $\mathbf{M}=(M_{i j})_{\mathcal A\times \mathcal A}$ be a matrix of zeros and ones (with no row and no column made entirely of zeros). The symbolic space associated to $\mathbf{M}$
	with alphabet $\mathcal{A}$ is defined by
	\begin{equation*}
		\Sigma:=\left\{x=x_1x_2\cdots\in \mathcal A^\N: M_{x_ix_{i+1}}=1,\ \forall\, i\in \N\right\}.
	\end{equation*}
	The {\it shift map} $\sigma:\Sigma\to\Sigma$ is defined by $(\sigma(x))_i:=x_{i+1}$. Note that $\sigma$ is a continuous map. The pair $(\Sigma,\sigma)$ is called a {\it countable Markov shift} (CMS). 
	
	Define a metric $d$ on $\Sigma$ as
	$
	d(x,y):=\exp(-\min\{i\in\N:\,x_i\neq y_i\}).
	$
	Then $(\Sigma,d)$ become a metric space, but not  compact since $\mathcal A$ is infinite. Just to ease the calculations we can suppose that $\mathcal{A}=\N$. 
	
	Let $w=w_1w_2\cdots w_n\in \mathcal A^n$, we say that $w$ is {\it admissible} of length $n$, if $M_{w_iw_{i+1}}=1$ for any $1\le i<n$. Write $a\xrightarrow{n} b$ if there is an admissible word of length $n+1$ which starts at $a$ and ends at $b$. Let 
	$|w|$ denote the length of an admissible word $w$.
	
	For any admissible word $w=w_1w_2\cdots w_n$ of length $n$, define the {\it cylinder} as
	\begin{equation*}
		[w]:=\left\{x\in \Sigma: x|_n=w\right\} \ \ \text{ where } \ \ x|_n:=x_1\cdots x_n.
	\end{equation*}
	We equip $\Sigma$ with the topology generated by the cylinders sets. 
	
	Assume that $(\Sigma,\sigma)$ is a CMS. We say that $(\Sigma,\sigma)$ is {\it topologically mixing}, if for any $a,b\in \mathcal A$, there exists $N_{ab}\in \N$ such that for any $n\ge N_{ab}$, there exists an admissible word $w=w_1w_2\cdots w_n$ such that $w_1=a$ and $w_n=b$, that is, $a\xrightarrow{n-1}b$.

	\begin{definition}\label{def-almost-additive}
		Let $(\Sigma,\sigma)$ be a CMS. For each $n\in\N$,
		let $f_n:\Sigma\to\R$ be a continuous function. The potential $\mathcal{F}=\{f_n\}_{n=1}^{\infty}$
		is called almost-additive if there exists a constant $C=C_{aa}(\F)\geq0$ such that for every $n,m\in\N,x\in\Sigma$, 
		\begin{equation*}
			f_n(x)+f_m(\sigma^nx)-C\leq f_{n+m}(x)\leq f_n(x)+f_m(\sigma^nx)+C.
		\end{equation*}
		In particular, if $C=0$, then $f_n=S_nf_1=\sum_{i=1}^{n-1}f_1\circ\sigma^{n-1}$ and $\F$ is called additive. 
	\end{definition}
	\begin{definition}
		Let $(\Sigma,\sigma)$ be a CMS. The potential $\mathcal{F}=\{f_n\}_{n=1}^{\infty}$ has bounded variation (is also called a Bowen sequence), if there exists a constant $C=C_{bv}(\F)\geq0$ such that
		\begin{equation*}
			\sup_{n}\left\{|f_n(x)-f_n(y)|:x,y\in\Sigma,\, x_i=y_i,\, 1\leq i\leq n\right\}\leq C.
		\end{equation*}
	\end{definition}
	It is noteworthy that potentials endowed with the almost-additive and bounded variation properties constitute a generalization of summable variation functions. Indeed, for a continuous function $f:\Sigma\to\R,$ let $var_n(f)=\{|f (x)-f(y)|: x,y\in\Sigma,\, x_i = y_i,\,  1\leq i\leq n\}$. A function $f$ is of {\it summable variation} if $\sum_{i=1}^{\infty}var_i(f)<\infty$. This regularity assumption has been widely used in ergodic theory \cite{Ba2011,Bo2008,ITV2022,Wa1982}
	
	The following pressure definition extends Sarig's \cite{Sa1999} to almost-additive potentials:
	\begin{definition}\label{def-pre}
		Assume $(\Sigma,\sigma)$ is a topologically mixing CMS. Let $\mathcal{F}=\{f_n\}_{n=1}^{\infty}$ be an almost-additive with bounded variation potential, the {\it Gurevich pressure} of $\mathcal{F}$ is defined as
		\begin{equation*}\label{def-pressure}
			P_G(\mathcal{F}):=\lim_{n\to \infty}\frac{1}{n} \log\Big(\sum_{\sigma^n x=x}\exp(f_n(x))\chi_{[a]}(x)\Big),\ \ \ a\in\mathcal{A}.
		\end{equation*}
	\end{definition}
	\begin{remark}
		Since $(\Sigma,\sigma)$ is topologically mixing, the Gurevich pressure is independent from the choice of $a\in \mathcal{A}$, see also \cite[Theorem 2.1 (3)]{IY2012}. 
	\end{remark}
	The following proposition is a generalisation of the variational principle for continuous
	functions to the setting of almost-additive potential (see \cite[Proposition 3.1 and Theorem 3.1]{IY2012}). In order to state it we need the following definition, given $f:\Sigma\to\R$
	a continuous function, the {\it Ruelle operator} $L_{f}$ applied to function $g:\Sigma\to\R$ is formally
	defined by
	\begin{equation*}
		\left(L_{f}g\right)(x):=\sum_{\sigma y=x}e^{f(y)}g(y),\quad\text{for every }x\in\Sigma.
	\end{equation*}
	
	\begin{proposition}[\cite{IY2012}]\label{IY-variational principle}
		Let $(\Sigma,\sigma)$ be a topologically mixing CMS. Assume $\F=\{f_n\}_{n=1}^{\infty}$ is almost-additive with bounded variation property such that $\|L_{f_1}1\|_{\infty}<\infty$. Then $-\infty<P_{G}(\F)<\infty$ and
		\begin{align*}
			P_{G}(\F)&=\sup\left\{h(\mu)+\F_*(\mu):\mu\in\mathcal{M}\ \text{and}\ \F_*(\mu)>-\infty\right\}\\
			&= \sup \{ P_{G}(\F|_{\Sigma'}) :\Sigma'\subset\Sigma\ \text{a topologically mixing finite state Markov shift}\,\}.\label{VPCompact}
		\end{align*}
		In particular, if there exists $\mu\in\M$ such that
		$
		P_{G}(\F)=h(\mu)+\F_*(\mu),
		$
		then $\mu$ is called an equilibrium state for potential $\F$.
	\end{proposition}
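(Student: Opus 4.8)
The plan is to establish the three assertions of the proposition---finiteness $-\infty<P_G(\F)<\infty$, the supremum over invariant measures, and the exhaustion by finite subsystems---in a slightly interleaved order, following the template of Sarig's countable-shift formalism adapted to the almost-additive setting.

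\textbf{Step 1: finiteness and existence of the defining limit.} Fix $a\in\mathcal A$ and put $Z_n(\F,a):=\sum_{\sigma^n x=x}e^{f_n(x)}\chi_{[a]}(x)$. I would first note that the concatenation of two periodic orbits through $[a]$ is admissible (each one ends in a symbol that loops back to $a$), so almost-additivity together with bounded variation yields $Z_{n+m}(\F,a)\ge e^{-3C}Z_n(\F,a)Z_m(\F,a)$ for a constant $C=C(\F)$; hence $n\mapsto\log Z_n(\F,a)-3C$ is superadditive and $P_G(\F)=\lim_n\frac1n\log Z_n(\F,a)$ exists by Fekete's lemma. For $P_G(\F)>-\infty$ I would use topological mixing to produce a single periodic point $x_0\in[a]$, $\sigma^{n_0}x_0=x_0$, and iterate almost-additivity along its orbit, obtaining $P_G(\F)=\lim_k\frac1{kn_0}\log Z_{kn_0}(\F,a)\ge\frac1{n_0}\big(f_{n_0}(x_0)-C_{aa}(\F)\big)>-\infty$. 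For $P_G(\F)<\infty$ I would telescope almost-additivity down to level one, $f_n\le S_nf_1+(n-1)C_{aa}(\F)$ with $S_nf_1=\sum_{i=0}^{n-1}f_1\circ\sigma^i$, then use bounded variation to pass from the sum over periodic points to $\|L_{f_1}^n\chi_{[a]}\|_\infty$ and the submultiplicativity $\|L_{f_1}^n1\|_\infty\le\|L_{f_1}1\|_\infty^n$, which gives $\limsup_n\frac1n\log Z_n(\F,a)\le C_{aa}(\F)+\log\|L_{f_1}1\|_\infty<\infty$. All of this carries over verbatim to any topologically mixing finite-state Markov subshift of $\Sigma$.

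\textbf{Step 2: exhaustion by finite subsystems, and the inequality $P_G(\F)\le\sup\{h(\mu)+\F_*(\mu)\}$.} If $\Sigma'\subset\Sigma$ is a topologically mixing finite-state subshift through $a$, its periodic orbits are among those of $\Sigma$, so $P_G(\F|_{\Sigma'})\le P_G(\F)$, and the supremum over such $\Sigma'$ is $\le P_G(\F)$. For the reverse inequality I would fix $\epsilon>0$, choose $n$ with $\frac1n\log Z_n(\F,a)>P_G(\F)-\epsilon$, truncate the convergent sum $Z_n(\F,a)$ to periodic orbits using only symbols from a large finite set $F$ (losing at most $\epsilon$ in the exponent), enlarge $F$ by the finitely many connecting words furnished by topological mixing so that the induced subshift $\Sigma_F$ is topologically mixing, and apply the superadditivity of Step 1 inside $\Sigma_F$ to get $P_G(\F|_{\Sigma_F})\ge\frac1n\log Z_n(\F|_{\Sigma_F},a)-\frac{3C}{n}\ge P_G(\F)-3\epsilon$. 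This proves the second identity of the proposition. Combining it with the classical variational principle for almost-additive potentials over a compact topologically mixing finite-state Markov shift and with the inclusion $\M(\Sigma')\subset\M$ gives $\sup\{h(\mu)+\F_*(\mu):\mu\in\M\}\ge\sup_{\Sigma'}P_G(\F|_{\Sigma'})=P_G(\F)$.

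\textbf{Step 3: the inequality $h(\mu)+\F_*(\mu)\le P_G(\F)$.} By affinity of entropy and of the Lyapunov exponent under ergodic decomposition I would reduce to ergodic $\mu$ with $\F_*(\mu)>-\infty$. Subadditivity of $(f_n+C_{aa}(\F))$ gives $\F_*(\mu)\le\int f_1\,d\mu+C_{aa}(\F)$, so $\int f_1\,d\mu>-\infty$; a Jensen (Ruelle-type) inequality built on $\|L_{f_1}1\|_\infty<\infty$ and $C_{bv}(\F)$ then forces $h(\mu)\le H_\mu(\{[b]\}_{b\in\mathcal A})<\infty$, so the expressions make sense. Kingman's subadditive ergodic theorem gives $\frac1nf_n\to\F_*(\mu)$ $\mu$-a.e., Shannon--McMillan--Breiman gives $-\frac1n\log\mu([x_1\cdots x_n])\to h(\mu)$ $\mu$-a.e., and, fixing $a$ with $\mu([a])>0$, Birkhoff's theorem combined with a Kac-type estimate gives that for $\mu$-a.e.\ $x\in[a]$ the first time $r_n(x)\ge n$ with $\sigma^{r_n(x)}x\in[a]$ satisfies $r_n(x)=n+o(n)$. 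Given $\epsilon>0$, Egorov's theorem yields a set $A\subset[a]$ with $\mu(A)>0$, an integer $N$ and $\delta>0$ such that for all $x\in A$ and $\ell\ge N$: $\mu([x_1\cdots x_\ell])\le e^{-\ell(h(\mu)-\epsilon)}$, $f_\ell(x)\ge\ell(\F_*(\mu)-\epsilon)$, and $r_n(x)\le(1+\delta)n$. The length-$n$ cylinders meeting $A$ are pairwise disjoint and cover $A$, so there are at least $\mu(A)e^{n(h(\mu)-\epsilon)}$ of them; for each such prefix, choosing $x\in A$ in it, the word $v:=x_1\cdots x_{r_n(x)}$ closes up (since $x_{r_n(x)+1}=x_1=a$) to a periodic orbit $v^\infty$ of period $r_n(x)\in[n,(1+\delta)n]$ through $[a]$, distinct prefixes giving distinct orbits, and bounded variation with the Egorov bounds (and finiteness of $\F_*(\mu)$, $|v|\le(1+\delta)n$) gives $f_{|v|}(v^\infty)\ge f_{|v|}(x)-C_{bv}(\F)\ge n(\F_*(\mu)-2\epsilon)-C_{bv}(\F)$. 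Summing these contributions, $\sum_{\ell=n}^{\lfloor(1+\delta)n\rfloor}Z_\ell(\F,a)\ge\mu(A)\,e^{-C_{bv}(\F)}\,e^{n(h(\mu)+\F_*(\mu)-3\epsilon)}$; since $P_G(\F)=\lim_m\frac1m\log Z_m(\F,a)$ by Step 1, taking logarithms, letting $n\to\infty$ and then $\delta,\epsilon\to0$ yields $P_G(\F)\ge h(\mu)+\F_*(\mu)$, which together with Step 2 completes the proof of the first identity.

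\textbf{Expected main obstacle.} Step 3 is the heart of the matter and is exactly where noncompactness intervenes: the cylinders charged by $\mu$ are not ``uniformly finitely many'', so one must genuinely use Egorov to extract a positive-measure set carrying simultaneous, uniform versions of the three limit theorems, and one must convert finite words into honest periodic orbits at only $o(1)$ cost in the exponent---here achieved by closing up at the natural return times to $[a]$ via the Kac-type bound $r_n(x)=n+o(n)$, which avoids invoking any unbounded ``connecting-word'' correction. The two structural constants $C_{aa}(\F)$ and $C_{bv}(\F)$ are used throughout to control $f_n$ under concatenation and under small coordinate changes, and the summability $\|L_{f_1}1\|_\infty<\infty$ is what simultaneously caps $P_G(\F)$ from above and rules out $h(\mu)=+\infty$. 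By comparison the inequality $P_G(\F)\le\sup\{h(\mu)+\F_*(\mu)\}$ is soft, resting only on importing the finite-alphabet almost-additive variational principle, for which these subtleties are absent.
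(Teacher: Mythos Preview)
The paper does not prove this proposition at all: it is stated with the attribution ``\cite{IY2012}'' and the surrounding text refers the reader to \cite[Proposition~3.1 and Theorem~3.1]{IY2012} for the proof. There is therefore nothing in the present paper to compare your argument against.

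For what it is worth, your outline is essentially the Sarig-style argument adapted to the almost-additive setting, which is indeed how Iommi--Yayama proceed, and it is correct in structure. A few places would need tightening in a full write-up: in Step~1 the superadditivity constant must absorb both $C_{aa}(\F)$ and $C_{bv}(\F)$, since you need bounded variation to transfer $f_n(x)+f_m(y)$ to the value of $f_{n+m}$ at the concatenated periodic point; in Step~3 the inequality $f_{|v|}(v^\infty)\ge n(\F_*(\mu)-2\epsilon)-C_{bv}(\F)$ tacitly uses that $\delta|\F_*(\mu)-\epsilon|\le\epsilon$, which is legitimate because $\F_*(\mu)$ is finite and $\delta$ is chosen after $\epsilon$; and extracting $P_G(\F)$ from $\sum_{\ell=n}^{\lfloor(1+\delta)n\rfloor}Z_\ell(\F,a)$ requires passing to the maximal summand and dividing by $\ell\le(1+\delta)n$, costing a harmless factor $(1+\delta)^{-1}$. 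None of these is a genuine gap.
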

	\begin{remark}
		Under the hypotheses of Proposition \ref{IY-variational principle}, one can check that pressure function $t\mapsto P_{G}(t\F)$ is convex on $(1,\infty)$, see also \cite[Corollary 3.2]{IY2012}.
	\end{remark}
	\begin{definition}
		Let $(\Sigma,\sigma)$ be a CMS. The potential $\mathcal{F}=\{f_n\}_{n=1}^{\infty}$ is called summable if 
		\begin{equation}\label{eqsum}
			\sum_{i \in \mathbb{N}} \exp(\sup (f_1|_{[i]})) < \infty \,.
		\end{equation}
	\end{definition}
	In this case, we observe that $\sup f_1,\,\|L_{f_1}1\|_{\infty}<\infty$, but $\lim_{i\to\infty}\sup f_1=-\infty$. Moreover, it is shown in \cite[Lemma 3.1]{Mo2007}\footnote{One can easily notice the BIP property is not required for this.} that the summability condition implies, for any $t > 1$, that
	\begin{equation}\label{eqsumt}
		\sum_{i \in \mathbb{N}} \sup (-tf_1|_{[i]})\exp(\sup (tf_1|_{[i]})) < \infty \,.
	\end{equation}
	The similar condition is considered in \cite{IY2014}. It is the key ingredient to prove in this paper the  existence of the equilibrium state associated to $t\F$ for each $t > 1$.
	
	\begin{definition}
		Let $(\Sigma,\sigma)$ be a topologically mixing CMS and $\F=\{f_n\}_{n=1}^{\infty}$ be an almost-additive potential on $\Sigma$. A measure $\mu\in\M$ is said to be a Gibbs state for $\F$ if there is a constant $C_2>C_1>0$ such that for any $x \in \Sigma$ and each $n \geq 1$,
		\begin{equation}\label{gibbs}
			C_1\leq \frac{\mu[x_1 \ldots x_{n}]}{\exp( f_n(x) - nP_{G}(\F))} \leq C_2 \,.
		\end{equation}
	\end{definition}

	When $\Sigma$ is compact, the equilibrium states and the Gibbs states are unique and agree, see for example, \cite[Lemma 18]{Mu2006}. Furthermore, there must exist a Gibbs state for almost-additive potential with bounded variation, and we can choose (see Appendix \ref{appendix})
	\begin{equation}\label{C2}
		C_2=\exp(C_{bv}(\F)). 
	\end{equation}
	\section{Compact subshifts approximation}\label{Sec-compact-app}
	
	Throughout this paper, we assume that the symbolic space $(\Sigma,\sigma)$ is topologically mixing and potential $\F$ is almost additive and summable with bounded variation. 
	Our aim is to prove the existence of equilibrium states for $t\F$ and accumulation points for such sequences. The techniques we use are inspired from \cite[Sect. 3]{FV2018}, and we employ a strategic approximation framework. Specifically, we construct sequences of compact subshifts of $\Sigma$ and analyze the corresponding Gibbs equilibrium states restricted to these subsystems.
	
	At first, we have the following compact subshifts approximation lemma:
	\begin{lemma}\label{approximation-lemma}
		There exists a sequence \((\Sigma_k)_{k \in \mathbb{N}}\) of compact topologically mixing subshifts of \(\Sigma\) such that \(\Sigma_k \subset \Sigma_{k+1}\) and
		\begin{equation}\label{VPApproximation}
			P(t) = \sup \big\lbrace P_k(t) : k \in \mathbb{N} \big\rbrace = \lim_{k \to \infty} P_k(t),\quad\text{for all}\ t>1,
		\end{equation}
		where $P_k(t) := P_G(t\mathcal{F}|_{k})$ and $\mathcal{F}|_{k} = \lbrace f_{n;k} \rbrace_{n=1}^{\infty}$ denotes the restriction of \(\mathcal{F}\) to $\Sigma_k$.
	\end{lemma}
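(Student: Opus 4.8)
The plan is to reduce the lemma to two essentially independent ingredients: a purely combinatorial \emph{enlargement} fact — that any finite set of symbols can be enlarged to a finite one whose induced Markov subshift of $\Sigma$ is topologically mixing — together with the compact-exhaustion form of the variational principle already recorded in Proposition \ref{IY-variational principle}. Throughout I will use that for $t>1$ the summability of $\F$ makes $t\F$ almost-additive with bounded variation and with $\|L_{tf_1}1\|_{\infty}<\infty$, so Proposition \ref{IY-variational principle} applies to $t\F$ and to each of its restrictions to a finite-state subshift; in particular all pressures appearing below are finite.

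First I would establish the enlargement fact. Fix a reference symbol, say $a_0=1$. Given a finite set $A\subset\mathcal A$ with $a_0\in A$, topological mixing lets me pick, for every ordered pair $(a,b)\in A\times A$, one admissible word $u_{ab}$ from $a$ to $b$, and pick two loops $\ell,\ell'$ based at $a_0$ of two consecutive (hence coprime) lengths, which exist because $a_0\xrightarrow{n-1}a_0$ for all $n\ge N_{a_0a_0}$. Let $\widetilde A\supseteq A$ be the finite set obtained by adjoining to $A$ every symbol occurring in the finitely many words $u_{ab},\ell,\ell'$, and put $\Sigma_{\widetilde A}:=\{x\in\Sigma:x_i\in\widetilde A\ \text{for all}\ i\}$, a compact $1$-step subshift of finite type contained in $\Sigma$. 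In the transition graph on $\widetilde A$ induced by $\mathbf M$, every vertex lies on one of the selected paths and is thus connected to $a_0$ in both directions, so the graph is strongly connected; and $\ell,\ell'$ give cycles through $a_0$ of coprime lengths, so it is aperiodic. Hence $\Sigma_{\widetilde A}$ is topologically mixing. I would then construct the family inductively: take $A_1:=\widetilde{\{1\}}$ and $A_{k+1}:=\widetilde{A_k\cup\{k+1\}}$, so that $A_k\subset A_{k+1}$, $\{1,\dots,k\}\subset A_k$, and each $\Sigma_k:=\Sigma_{A_k}$ is compact, topologically mixing, and contained in $\Sigma_{k+1}$ — the last point because $\Sigma_{k+1}$ is defined by the larger admissible alphabet $A_{k+1}\supseteq A_k$, so restoring mixing never shrinks the subshift. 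Note every finite subset of $\mathcal A$ lies in some $A_k$.

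For \eqref{VPApproximation}, fix $t>1$. By Proposition \ref{IY-variational principle} on the compact topologically mixing SFT $\Sigma_k$, $P_k(t)=\sup\{h(\mu)+t\F_*(\mu):\mu\in\M(\Sigma_k)\}$, and since $\Sigma_k\subset\Sigma_{k+1}\subset\Sigma$ forces $\M(\Sigma_k)\subset\M(\Sigma_{k+1})\subset\M(\Sigma)$, the sequence $(P_k(t))_k$ is non-decreasing with $P_k(t)\le P(t)$; thus $\sup_kP_k(t)=\lim_kP_k(t)\le P(t)$. For the opposite inequality I would fix $\varepsilon>0$ and use the second equality in Proposition \ref{IY-variational principle} to get a topologically mixing finite-state Markov subshift $\Sigma'\subset\Sigma$ with $P_G(t\F|_{\Sigma'})>P(t)-\varepsilon$; its (finite) alphabet lies in $A_k$ for all large $k$, so $\Sigma'\subset\Sigma_k$ and hence, again via Proposition \ref{IY-variational principle} and $\M(\Sigma')\subset\M(\Sigma_k)$, $P_k(t)\ge P_G(t\F|_{\Sigma'})>P(t)-\varepsilon$. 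Letting $\varepsilon\to0$ and combining the two bounds yields \eqref{VPApproximation}.

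The main obstacle is to secure simultaneously the three structural requirements on $(\Sigma_k)$ — compactness, topological mixing, and nestedness — while keeping the pressure identity uniform over all $t>1$; this is exactly what the enlargement fact buys, since enlarging the alphabet to restore mixing only makes the subshift bigger (so mixing and nestedness are compatible), and the variational principle then transfers the approximation from the abstract class of finite-state mixing subshifts to the concrete nested family at once for every $t$. Everything else (monotonicity, finiteness of the pressures, and the gcd/aperiodicity bookkeeping) is routine given the results already quoted.
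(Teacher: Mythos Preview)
Your proposal is correct and follows essentially the same strategy as the paper: both build the nested sequence $(\Sigma_k)$ by inductively enlarging a finite alphabet with the symbols occurring on finitely many connecting words supplied by topological mixing, verify that the resulting finite-state SFT is mixing via cycles of coprime (in fact consecutive) lengths, and then read off \eqref{VPApproximation} from the compact-exhaustion form of the variational principle in Proposition~\ref{IY-variational principle}.

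The only substantive difference is organizational. The paper, at stage $k$, adds for \emph{every} ordered pair in $\mathcal S_k$ two connecting words of lengths $N_k$ and $N_k+1$, and then argues mixing by exhibiting explicit concatenations; you instead add one connecting word per pair together with two coprime loops at the reference symbol, and appeal to the standard ``strongly connected plus aperiodic'' criterion. Both work. Where your write-up is a bit more careful is in forcing $\{1,\dots,k\}\subset A_k$ by adjoining the symbol $k{+}1$ at each stage: this guarantees that any topologically mixing finite-state subshift $\Sigma'\subset\Sigma$ is eventually contained in some $\Sigma_k$, which is exactly what one needs to upgrade the abstract supremum in Proposition~\ref{IY-variational principle} to $\sup_k P_k(t)=P(t)$. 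The paper invokes Proposition~\ref{IY-variational principle} at that point without recording this exhaustion property, so your version makes the last step of the argument cleaner.
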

	\begin{proof}
We define the compact topologically mixing subshift $\Sigma_k\subset\Sigma$ inductively as follows:
		
		At first, let $\mathcal{S}_1=\{1\}$. There exist $N_1\geq2$ and sets $
		\{e_1,\cdots, e_{N_1-1}\},\, \{c_1,\cdots, c_{N_1}\}\subset\N
		$ such that $1\, e_1e_2\cdots e_{N_1-1}1$ and $1\, c_1c_2\cdots c_{N_1}1$ are admissible words by topologically mixing property. Then we define 
		\begin{equation*}
			\Sigma_1:=\left\{x=x_1x_2\cdots\in\mathcal{A}_1^{\N}:M_{x_nx_{n+1}}=1,\ \forall\,n\in\N\right\},
		\end{equation*}
where $\mathcal{A}_1:=\mathcal{S}_1\cup\mathcal{B}_1$ and 
$\mathcal{B}_1=\left\{e_i,c_j:1\leq i\leq N_1-1;\,1\leq j\leq N_1\right\}$.

		Next, assume that for any $k\geq2$, the subshifts $\Sigma_{k-1}$ have been defined. Write $\mathcal{S}_k=\mathcal{S}_{k-1}\cup\mathcal{B}_{k-1}$ and take $N_k=\sup\{N_{ab}:a,b\in\mathcal{S}_k\}\in\N$ is such that for each $i,j\in\mathcal{S}_k$, there exist sets $
		\{e_1^{ij},e_2^{ij},\cdots, e_{N_k-1}^{ij}\},\,\{c_1^{ij},c_2^{ij},\cdots, c_{N_k}^{ij}\}\subset\N
		$ such that the words $i\, e_1^{ij}e_2^{ij}\cdots e_{N_k-1}^{ij}\,j$ and $i\, c_1^{ij}c_2^{ij}\cdots c_{N_k}^{ij}\,j$ are admissible in $\Sigma$. Now we define $
		\mathcal{B}_k=\bigcup_{i,j\in\mathcal{S}_k}\{e^{ij}_p,c^{ij}_q:1\leq p\leq N_k-1;\,1\leq q\leq N_k\}
		$
		and $\mathcal{A}_k:=\mathcal{S}_k\cup\mathcal{B}_k$. Then we define 
		\begin{equation*}
			\Sigma_k:=\left\{x=x_1x_2\cdots\in\mathcal{A}_k^{\N}:M_{x_nx_{n+1}}=1,\ \forall\,n\in\N\right\}.
		\end{equation*}
		It is seen that $\mathcal{A}_k$ is finite and $\Sigma_k\subset\Sigma_{k+1}\subset\Sigma$ for all $k\in\N$.
		
		Now we prove that $\Sigma_k$ is topologically mixing for all $k\in\N$. For any $i,j\in\mathcal{A}_k$, there exist $a,b\in\mathcal{A}_k$ and $0\leq m_1=m_1(a,b),m_2=m_2(a,b)\leq N_k$ such that
	$		i \xrightarrow{m_1} a\ \ \text{and} \ \ b \xrightarrow{m_2} j$
		on $\Sigma_k$. 
		By the definitions of $\mathcal{S}_k$ and $\mathcal{B}_k$, for any $p,q\geq0$, we have
		\begin{equation}\label{admissible-con}
			i \xrightarrow{m_1}\underset{\substack{p}}{\underbrace{a \xrightarrow{N_{k}}a\cdots a\xrightarrow{N_{k}}a}} \xrightarrow{N_{k}} b\underset{\substack{q}}{\underbrace{ \xrightarrow{N_{k}+1}b\cdots b\xrightarrow{N_{k}+1}b}}\xrightarrow{m_2} j
		\end{equation}
		on $\Sigma_k$. Combine with \eqref{admissible-con}, we see that for all $i,j\in\mathcal{A}_k$, there exists $$N(i,j):=m_1+m_2+N_{k}+N_{k}(N_{k}+1)\in\N$$ such that for all $n\geq N(i,j)$, we have $i\xrightarrow{n} j$ by writing
		\begin{equation*}
			n=m_1+m_2+N_{k}+pN_{k}+q(N_{k}+1).
		\end{equation*}
		
		Since $\Sigma_k$ is topologically mixing, for each $k\in\mathbb{N}$ and $a\in\mathcal{A}_k$, we have 
		$$
		P_k(t)=P_G(t\F|_{k})=\lim_{n\to\infty}\frac{1}{n}\log\sum_{\sigma^n x = x}\exp(t f_n(x))\chi_{[a]}(x)\chi_{\Sigma_k}(x).
		$$
		Observe that $\Sigma_k\subset\Sigma_{k+1}\subset\Sigma$, then for any $t>1$ we have $P_k(t) \leq P_{k+1}(t) \leq P(t)$. Now equation \eqref{VPApproximation} just follows from Proposition \ref{IY-variational principle}.
	\end{proof}
	
	Let $\mu_{t,k}:=\mu_{t\F|_{k}}$ be the Gibbs state related to potential $t \F|_{k}$. Bellow we show that the sequence $(\mu_{t,k})_{k \in \mathbb{N}}$ in $\mathcal{M}$ has a convergent subsequence. For this, we  prove that this sequence is tight. Recall that a subset $\mathcal{K} \subset \mathcal{M}$ is tight if for every $\varepsilon > 0$ there is a compact set $K \subset \Sigma$ such that $\mu(K^c) <\varepsilon$ for any $\mu \in \mathcal{K}$.
	
	\begin{lemma}\label{lematight} For each $t > 1$, the equilibrium states sequence $(\mu_{t,k})_{k \in \mathbb{N}}$ is tight.
	\end{lemma}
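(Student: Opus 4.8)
The plan is to bound the mass that each $\mu_{t,k}$ places on a cylinder $[i]$ by a quantity that is independent of $k$ and summable over $i$, and then to combine this with the $\sigma$-invariance of the $\mu_{t,k}$. First I would reduce the claim to the first coordinate: since each $\mu_{t,k}$ is $\sigma$-invariant, for every $n\in\N$ and every $i\in\mathcal{A}$ one has $\mu_{t,k}(\{x:x_n=i\})=\mu_{t,k}([i])$, so $\mu_{t,k}(\{x:x_n>N\})=\sum_{i>N}\mu_{t,k}([i])$ does not depend on $n$. Hence it suffices to produce a function $N\mapsto\delta(N)$ with $\delta(N)\to0$ as $N\to\infty$ and $\sum_{i>N}\mu_{t,k}([i])\le\delta(N)$ for all $k$: given $\varepsilon>0$ one then chooses $N_n$ with $\delta(N_n)<\varepsilon 2^{-n}$ and sets $K:=\{x\in\Sigma:x_n\le N_n\text{ for all }n\}$, which is compact and satisfies $\mu_{t,k}(K^c)\le\sum_n\mu_{t,k}(\{x:x_n>N_n\})\le\sum_n\delta(N_n)<\varepsilon$ for every $k$.

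The heart of the argument is a uniform cylinder estimate. Since $\Sigma_k$ is compact and topologically mixing and $t\F|_k$ is almost-additive with bounded variation, $\mu_{t,k}$ is the Gibbs state for $t\F|_k$, so by \eqref{gibbs} applied with $n=1$, for any $i\in\mathcal{A}_k$ and any $x\in[i]\cap\Sigma_k$ one gets
\[
\mu_{t,k}([i])\ \le\ C_2\,\exp\!\big(tf_{1;k}(x)-P_k(t)\big)\ \le\ \exp\!\big(tC_{bv}(\F)\big)\,\exp\!\big(t\sup(f_1|_{[i]})\big)\,e^{-P_1(t)}.
\]
Here I would invoke three observations, each of which keeps the bound independent of $k$: the Gibbs constant for $t\F|_k$ can be taken to be $C_2=\exp(C_{bv}(t\F|_k))\le\exp(tC_{bv}(\F))$ by \eqref{C2}, because the bounded-variation constant of the restriction $t\F|_k$ is at most $tC_{bv}(\F)$; the function $f_{1;k}$ is the restriction of $f_1$, whence $f_{1;k}(x)\le\sup(f_1|_{[i]})$; and $P_k(t)\ge P_1(t)>-\infty$ by the monotonicity established in Lemma \ref{approximation-lemma}. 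When $i\notin\mathcal{A}_k$ we simply have $\mu_{t,k}([i])=0$. Summing over $i>N$ gives $\sum_{i>N}\mu_{t,k}([i])\le e^{tC_{bv}(\F)-P_1(t)}\sum_{i>N}\exp(t\sup(f_1|_{[i]}))=:\delta(N)$, a bound with no dependence on $k$.

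Finally I would check $\delta(N)\to0$. The summability condition \eqref{eqsum} gives $\exp(\sup(f_1|_{[i]}))\to0$, so $\sup(f_1|_{[i]})<0$ for all large $i$, and therefore $\exp(t\sup(f_1|_{[i]}))\le\exp(\sup(f_1|_{[i]}))$ for such $i$ since $t>1$; thus $\sum_i\exp(t\sup(f_1|_{[i]}))<\infty$ and its tail $\delta(N)$ tends to $0$, which together with the first paragraph yields tightness. The only genuinely delicate point is the uniformity in $k$ — of the Gibbs constant $C_2$ and of the lower bound on the pressures $P_k(t)$; granting these, the rest is a routine tail estimate of a convergent series.
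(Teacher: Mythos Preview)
Your proof is correct and follows essentially the same route as the paper's: a Gibbs upper bound on $\mu_{t,k}([i])$, a uniform-in-$k$ lower bound on the pressures $P_k(t)$, and then a tail estimate combined with $\sigma$-invariance and a compact exhaustion. The only difference is that the paper bounds $P_k(t)$ from below by $tS$ with $S=\F_*(\mu)$ for a periodic-orbit measure $\mu$ supported on $\Sigma_1$ (rather than by $P_1(t)$ via monotonicity), which has the side benefit of yielding the $t$-independent cylinder estimate \eqref{mu-upper} that is later reused in the proof of Theorem~\ref{equilibrium state} to establish tightness of the family $(\mu_t)_{t>1}$.
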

	
	\begin{proof} 
		Our proof is similar to the proof in \cite{JMU2005,FV2018}. Assume that $\varepsilon> 0$ and
		$$
		K = \{x \in \Sigma : 1 \leq x_m \leq n_{m} \text{ for each $m \in \mathbb{N}$}\} \,,
		$$
		where $(n_m)_{m \in \mathbb{N}}$ in $\mathbb{N}$ is an increasing sequence (we choose large $n_m$ satisfies the inequality \eqref{def-n-m}).
		It is seen that the set $K$ is compact in the product topology and satisfies
		\begin{align}\label{eqcom1}
			\mu_{t,k}(K^c)
			&=\mu_{t,k}\Big(\bigcup_{m\in\N}\{x\in\Sigma:x_m>n_m\}\Big)\notag\\
			&\leq\sum_{m \in \mathbb{N}} \sum_{i > n_m} \mu_{t,k}(\{x\in\Sigma:x_m=i\})\leq\sum_{m \in \mathbb{N}} \sum_{i > n_m} \mu_{t,k}([i]).
		\end{align}
		
		Let $\mu \in \mathcal{M}$ such that $S=\F_*(\mu)$ satisfies $-\infty < S < \infty$ (it is sufficient to choose $\mu = \frac{1}{p}\sum^{p - 1}_{j = 0} \delta_{\sigma^j \bar{x}}$ with $\bar{x} \in Per_p(\Sigma_1)$). Notice that  $\Sigma_1\subset \Sigma_k$ for all $k \in \mathbb{N}$, we can consider $\mu$ to be a measure well defined both in $\Sigma$ or in $\Sigma_k$ for any $k$. Let $S_k =(\F|_{k})_*(\mu)$ and by the previous comment, we have that $S_k$ is well defined, and it is also clear that $S_k = S$ for any $k \in \mathbb{N}$. Furthermore, we have that
		\begin{equation}\label{eqcom2}
			P_k(t) - tS_k\geq h(\mu) + t(\F|_{k})_*(\mu)-t S_k = h(\mu) \geq 0, \,\ \text{for all}\ k\in\N.
		\end{equation}
		
		Since $\Sigma_k$ is compact, by \eqref{gibbs}, \eqref{C2} and the fact that $\exp(tC_{bv}(\F|_{k})) \leq \exp(tC_{bv}(\F))$, then for all $x \in [i]$ we have that
		\begin{equation}\label{compact-gibbs-measure}
			\frac{\mu_{t,k}[i]}{\exp(t f_{1;k}(x)-P_k(t))} \leq \exp(tC_{bv}(\F)), \,\ \text{for all}\ k\in\N.
		\end{equation}
		Recall that $S_k=S$ for all $k$, and by \eqref{compact-gibbs-measure} and  \eqref{eqcom2}, we obtain that for each $x \in [i]$,
		\begin{align*}
			\mu_{t,k}([i]) &\leq \exp(tC_{bv}(\F)+t f_{1;k}(x)-P_k(t)) \\
			&\leq\exp(t(C_{bv}(\F) + \sup f_1 |_{[i]} - S_k)) \exp(tS_k- P_k(t)) \\
			&\leq \exp(t(C_{bv}(\F)+\sup f_1 |_{[i]} - S)).               
		\end{align*}
		Since $\F$ is summable, then for $i$ large enough, we have $C_{bv}(\F) + \sup f_1 |_{[i]} - S \leq 0$ and 
		\begin{equation}\label{mu-upper}
			\mu_{t,k}([i])\leq \exp(C_{bv}(\F) + \sup f_1 |_{[i]} - S).
		\end{equation}
		
		Moreover, the summability condition (see \eqref{eqsum}) shows that one can choose the sequence $(n_m)_{m \in \mathbb{N}}$ satisfies
		\begin{equation}\label{def-n-m}
			\sum_{i > n_m} \exp(\sup f_1|_{[i]}) < \frac{\varepsilon}{2^{m + 1}} \exp(S -C_{bv}(\F)) \,.
		\end{equation}
		Therefore, by \eqref{eqcom1}, we deduce that
		$$
		\mu_{t,k}(K^c)\leq\sum_{m \in \mathbb{N}} \sum_{i > n_m} \mu_{t,k}([i])\leq\sum_{m\in\N} \exp(C_{bv}(\F)+\sup f_1|_{[i]} - S)\leq\sum_{m \in \mathbb{N}} \frac{\varepsilon}{2^{m + 1}} = \varepsilon \,.
		$$
		Now the proof is complete.
	\end{proof}
	
	\section{Proof of Theorem \ref{equilibrium state}}\label{Sec-proof-Theorem-1}
	
	In this section we prove our first theorem, which is a consequence of the tightness of the sequence $(\mu_{t,k})_{k \in \mathbb{N}}$ and two continuous lemmas, Lemma \ref{lema_lim_ut} and Lemma \ref{entropy-continuous-lemma}. 
	
	We can suppose, w.l.o.g. that $f_n\leq0$, since $\sup f_1$ is bounded above by the fact that it is almost-additive, so we can consider $f_n-n\sup f_1-nC_{aa}(\F)$ instead of $f_n$. 
	
	By Lemma \ref{lematight} and \emph{Prohorov's theorem}, for each $t>1$, there exists a subsequence $(\mu_{t,k_m})_{m \in \mathbb{N}}$ of the sequence $(\mu_{t,k})_{k \in \mathbb{N}}$ and a measure $\mu_t \in \mathcal{M}$ such that 
	\begin{equation}\label{weak-star}
		\mu_t = \lim_{m \to \infty} \mu_{t,k_m}.
	\end{equation}
		
	We begin proving the upper semi-continuity of the limit of the integrals. 
	Observe that this result is not a direct consequence of the weak star convergence of the measure sequence since the potential $\F=\{f_n\}_{n=1}^{\infty}$ is not bounded bellow.
	
	\begin{lemma}\label{lema_lim_ut}
		The map $\F_*(\cdot):\mathcal{M}\to\R$ is upper semicontinuous. In particular, we have
		\begin{equation*}
			\limsup_{m \to \infty} \F_*(\mu_{t,k_m})\leq \F_*(\mu_t),\quad\text{for each }t>1.
		\end{equation*} 
	\end{lemma}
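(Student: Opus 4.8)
The plan is to realise $\F_*$ as a pointwise infimum over $n\in\N$ of weak$^*$-continuous functions on $\M$; since a pointwise infimum of an arbitrary family of upper semicontinuous functions is again upper semicontinuous, this gives the first assertion, and the displayed inequality then follows at once by applying upper semicontinuity at $\mu_t$ together with the weak$^*$ convergence $\mu_{t,k_m}\to\mu_t$ recorded in \eqref{weak-star}. Note that $\F_*(\mu_{t,k_m})$, computed in $\Sigma$, coincides with $(\F|_{k_m})_*(\mu_{t,k_m})$ since $\mu_{t,k_m}$ is carried by $\Sigma_{k_m}$, so it is legitimate to treat all the $\mu_{t,k_m}$ as elements of $\M$ and to argue on $\M$.

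\emph{Step 1: an infimum representation of $\F_*$.} Recall that we have normalised the potential so that $f_n\le 0$ for all $n$. Fix $\mu\in\M$ and put $b_n:=\int f_n\,d\mu\in[-\infty,0]$. Integrating the almost-additivity inequality $f_{n+m}\le f_n+f_m\circ\sigma^n+C_{aa}(\F)$ against $\mu$ and using that $\mu$ is $\sigma$-invariant yields $b_{n+m}\le b_n+b_m+C_{aa}(\F)$, so $\big(b_n+C_{aa}(\F)\big)_{n\ge 1}$ is subadditive. By Fekete's lemma (which remains valid for sequences in $[-\infty,\infty)$) we obtain
\begin{equation*}
	\F_*(\mu)=\lim_{n\to\infty}\frac1n\int f_n\,d\mu=\inf_{n\ge 1}\Big(\frac1n\int f_n\,d\mu+\frac{C_{aa}(\F)}{n}\Big).
\end{equation*}
When $b_1=-\infty$, the lower almost-additivity bound forces $b_n=-\infty$ for every $n$ and both sides equal $-\infty$, so the identity holds in all cases.

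\emph{Step 2: each term is weak$^*$-continuous, and conclusion.} For a fixed $n$ and each $M\in\N$, the truncation $\max(f_n,-M)$ is continuous and bounded (it takes values in $[-M,0]$), so $\mu\mapsto\int\max(f_n,-M)\,d\mu$ is continuous on $\M$ for the weak$^*$ topology. Since $\min(-f_n,M)\uparrow -f_n$ pointwise as $M\to\infty$, the monotone convergence theorem gives $\int f_n\,d\mu=\inf_{M\in\N}\int\max(f_n,-M)\,d\mu$ for every $\mu\in\M$. Hence $\mu\mapsto\int f_n\,d\mu$, being an infimum of weak$^*$-continuous functions, is upper semicontinuous, and therefore so is $\mu\mapsto\frac1n\int f_n\,d\mu+\frac{C_{aa}(\F)}{n}$. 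Taking the infimum over $n$ and invoking Step 1, $\F_*:\M\to[-\infty,\infty)$ is an infimum of upper semicontinuous functions, hence upper semicontinuous. Finally, from \eqref{weak-star} and upper semicontinuity at $\mu_t$ we get $\limsup_{m\to\infty}\F_*(\mu_{t,k_m})\le\F_*(\mu_t)$ for every $t>1$.

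The one genuinely delicate point — and the place where care is needed — is that the $f_n$ are not bounded below (indeed $\sup f_1|_{[i]}\to-\infty$ under summability), so $\mu\mapsto\int f_n\,d\mu$ fails to be weak$^*$-continuous; this is dealt with by the truncations $\max(f_n,-M)$, and, crucially, by using the \emph{infimum} representation of $\F_*$ from Step 1 rather than a mere limit — a pointwise limit of upper semicontinuous functions need not be upper semicontinuous, whereas an infimum always is. It is precisely in Step 1 that almost-additivity (through Fekete's lemma applied to $\sigma$-invariant $\mu$) enters.
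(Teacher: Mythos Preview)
Your proof is correct and follows essentially the same strategy as the paper: both express $\F_*(\mu)=\inf_{n\ge 1}\frac{1}{n}\int(f_n+C_{aa}(\F))\,d\mu$ via subadditivity, then show each term is upper semicontinuous by approximating $f_n$ from above by bounded continuous functions and invoking monotone convergence. The only cosmetic difference is the choice of truncation: the paper multiplies $f_m$ by the indicator of the clopen set $\Sigma(K)=\{x:x_1,\dots,x_m\le K\}$ (relying on bounded variation to see that $f_m\cdot\chi_{\Sigma(K)}$ is bounded), whereas you truncate by value via $\max(f_n,-M)$, which is slightly cleaner and does not need bounded variation at this step.
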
 
	\begin{proof}		
Let $(\nu_n)_{n\geq1}$ be a sequence of measures in $\mathcal{M}$ which converge to the measure $\nu\in\mathcal{M}$ in the weak-star topology. 
		
		\noindent{\bf Claim:} For any $m\in\N$, we have
		\begin{equation*}
			\limsup_{n\to\infty}\F_*(\nu_n)\leq\int\frac{f_m+C_{aa}(\F)}{m}\,d\nu.
		\end{equation*}
		\noindent $\lhd$	
		For any $K\in\N$, write
		\begin{equation*}
			\Sigma(K):=\Big\{x=x_1x_2\cdots\in\Sigma:\,x_1\cdots x_m\in\{1,\cdots,K\}^m\Big\}.
		\end{equation*}
		Then $\Sigma(K)\uparrow\Sigma$ as $K\uparrow\infty$. Define
		$$\varphi_K:=f_{m}\cdot\chi_{\Sigma(K)}.$$
		
		Since $\varphi_K\leq0$, we have $\varphi_K\downarrow f_{m}$, as $K\to\infty$. Notice that $\Sigma(K)$ is both open and closed,
		so $\chi_{\Sigma(K)}$ is continuous, and hence $\varphi_K$ is bounded and continuous on $\Sigma$. So we have
		\begin{align*}
			\limsup_{n\to\infty}\F_*(\nu_{n})=	&\limsup_{n\to\infty}\inf_m\int \frac{f_{m}+C_{aa}(\F)}{m}\,d\nu_n\leq\limsup_{n\to\infty}\int \frac{f_{m}+C_{aa}(\F)}{m}\,d\nu_n
			\\
			&\leq\limsup_{n\to\infty}\int \frac{\varphi_{K}+C_{aa}(\F)}{m}\,d\nu_n\leq\int \frac{\varphi_{K}+C_{aa}(\F)}{m}\,d\nu.
		\end{align*}
		Now by the monotone convergence theorem, we have
		\begin{equation*}
			\lim_{K\to \infty}\int \frac{\varphi_{K}+C_{aa}(\F)}{m}\,d\nu=\int \frac{f_{m}+C_{aa}(\F)}{m}\,d\nu.
		\end{equation*}
		So the claim holds. \hfill $\rhd$
		
		The result follows directly from the claim.
	\end{proof}
	\begin{remark}
		(1) This above lemma holds under weaker assumptions that the potential $\F=\{f_n\}_{n=1}^{\infty}$
		is almost-additive and has bounded variation with $\sup f_1<\infty$.
		
		(2) Lemma \ref{lema_lim_ut} was presented in \cite[Lemma 4.1]{IY2014}. Since the potential $\F$ is not bounded bellow, thus \cite[Lemma 4.1]{IY2014} was actually not completely proved there. 
	\end{remark}
	Now we turn our attention to the Kolmogorov-Sinai entropy. Let $\xi =\{[a]:a \in\N\}$ be the natural partition of $\Sigma$, for any $\mu \in\mathcal{M}$, its Kolmogorov-Sinai entropy is defined by
	\begin{equation}\label{def-entropy}
		h(\mu)=\lim_{n\to \infty}\frac{H_{\mu}(\xi^n)}{n}=\inf_n\frac{H_{\mu}(\xi^n)}{n},\quad\text{where $H_{\mu}(\xi^n):=\sum_{w\in\N^n}-\mu([w])\log\mu([w])$}
	\end{equation}
	
	In this way, we have the following upper semicontinuous property of the entropy map:
	\begin{lemma}\label{entropy-continuous-lemma}
		Let $(\mu_{t,k_m})_{m\in\N}$ be a sequence given by \eqref{weak-star}. Then 
		\begin{equation}\label{upper-semicontinuous-compact}
			\limsup_{m\to \infty} h(\mu_{t,k_m})\leq h(\mu_t).
		\end{equation}
If there exists a sequence $(t_k)_{k\in\N}$ with $t_k\to s\in(1,\infty]$ such that $\lim_{k\to\infty}\mu_{t_k}=\mu$, then 
		\begin{equation}\label{upper-semicontinuous}
			\limsup_{k\to \infty} h(\mu_{t_k})\leq h(\mu).
		\end{equation}
	\end{lemma}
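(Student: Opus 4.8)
The plan is to prove both \eqref{upper-semicontinuous-compact} and \eqref{upper-semicontinuous} by the same device. In either case I am given a sequence of measures $(\nu_j)_j$ converging weak-$\ast$ to a measure $\nu$ (namely $\nu_j=\mu_{t,k_j}\to\mu_t=\nu$ with $t>1$ fixed, or $\nu_j=\mu_{t_j}\to\mu=\nu$ with $t_j\to s\in(1,\infty]$), and since $h(\rho)=\inf_n H_\rho(\xi^n)/n$ it suffices to show
\[
\limsup_{j\to\infty}H_{\nu_j}(\xi^n)\le H_{\nu}(\xi^n)\qquad\text{for every fixed }n\in\N;
\]
dividing by $n$ and taking $\inf_n$ on the right then finishes. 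Write $\psi(p):=-p\log p$, which is continuous on $[0,1]$ and satisfies $\psi(p)\le\frac1{e\delta}p^{1-\delta}$ for every $\delta\in(0,1)$. For $K\in\N$ split $H_{\nu_j}(\xi^n)=\mathrm{(I)}+\mathrm{(II)}$, where $\mathrm{(I)}=\sum_{w\in\{1,\dots,K\}^n}\psi(\nu_j[w])$ and $\mathrm{(II)}=\sum_{w\in\N^n,\ \max_i w_i>K}\psi(\nu_j[w])$. Since each cylinder $[w]$ is clopen, $\nu_j[w]\to\nu[w]$, so $\mathrm{(I)}$ is a finite sum of continuous functions of the measure and $\mathrm{(I)}\to\sum_{w\in\{1,\dots,K\}^n}\psi(\nu[w])\le H_\nu(\xi^n)$ as $j\to\infty$. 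Everything thus reduces to making the tail $\mathrm{(II)}$ small, uniformly in $j$, by taking $K$ large.

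To control $\mathrm{(II)}$ I use the Gibbs property on the compact approximants. Fix $t>1$. Combining \eqref{gibbs}, \eqref{C2} (note $C_{bv}(t\F|_k)=tC_{bv}(\F|_k)\le tC_{bv}(\F)$), the almost-additivity estimate $\sup f_n|_{[w]}\le\sum_{i=1}^n\sup f_1|_{[w_i]}+(n-1)C_{aa}(\F)$, and the lower bound $P_k(t)\ge tS$ (which is exactly \eqref{eqcom2}, $S:=\F_*(\mu)\le0$ for the fixed periodic measure on $\Sigma_1$), one obtains for every admissible $w\in\N^n$ and every $k$
\[
\mu_{t,k}[w]\ \le\ e^{\,tD_n}\prod_{i=1}^n a_{w_i}^{\,t},\qquad a_\ell:=\exp\big(\sup f_1|_{[\ell]}\big)\in(0,1],\quad D_n:=C_{bv}(\F)+(n-1)C_{aa}(\F)+n|S|,
\]
and this bound passes to any weak-$\ast$ accumulation point of the family $(\mu_{t,k})_k$, hence holds for all the $\nu_j$ and for $\nu$. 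Recall $\sum_\ell a_\ell<\infty$ by \eqref{eqsum}, so $a_\ell\to0$ and $a_\ell^\beta\le a_\ell$ for any $\beta\ge1$.

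For \eqref{upper-semicontinuous-compact}, $t$ is fixed, so $e^{tD_n}$ is a harmless constant: choose $\delta\in(0,1-1/t)$ so that $\beta:=t(1-\delta)>1$; then $\psi(\nu_j[w])\le\frac1{e\delta}e^{(1-\delta)tD_n}\prod_i a_{w_i}^{\beta}$, and $\sum_{w:\,\max_i w_i>K}\prod_i a_{w_i}^{\beta}$ is the tail of the convergent series $\big(\sum_\ell a_\ell^{\beta}\big)^n$, hence $\to0$ as $K\to\infty$. Thus $\mathrm{(II)}\to0$ uniformly in $j$, which together with the convergence of $\mathrm{(I)}$ proves \eqref{upper-semicontinuous-compact}.

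For \eqref{upper-semicontinuous} the single new difficulty—and the main obstacle of the lemma—is that when $s=\infty$ the Gibbs constant $e^{t_jD_n}$ blows up. This is defeated by using $\sup f_1|_{[\ell]}\to-\infty$ to absorb it. Set $t_*:=\inf_{j\ge j_0}t_j>1$ for $j_0$ large (possible as $t_j\to s>1$), and choose $K=K(n)$ so large that $\sup f_1|_{[\ell]}<-D_n$ for all $\ell>K$. If $w_{j^*}=\max_i w_i>K$, then, writing $\prod_i a_{w_i}^{t_j}\le a_{w_{j^*}}^{\,t_j-t_*}\prod_i a_{w_i}^{\,t_*}$ (each factor $a_{w_i}^{t_j-t_*}\le1$) and using $\log a_{w_{j^*}}<-D_n$ together with $t_j\ge t_*$,
\[
e^{\,t_jD_n}\,a_{w_{j^*}}^{\,t_j-t_*}=\exp\!\big(t_j(D_n+\log a_{w_{j^*}})-t_*\log a_{w_{j^*}}\big)\le\exp\!\big(t_*(D_n+\log a_{w_{j^*}})-t_*\log a_{w_{j^*}}\big)=e^{\,t_*D_n},
\]
so $\mu_{t_j}[w]\le e^{\,t_*D_n}\prod_i a_{w_i}^{\,t_*}$ for all $j\ge j_0$ and all $w$ with $\max_i w_i>K(n)$. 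From here the argument repeats the fixed-parameter case with $t_*$ in place of $t$: pick $\delta\in(0,1-1/t_*)$, so $\beta:=t_*(1-\delta)>1$, bound $\psi(\mu_{t_j}[w])\le\frac1{e\delta}e^{(1-\delta)t_*D_n}\prod_i a_{w_i}^{\beta}$ on the tail, and conclude that $\mathrm{(II)}\to0$ as $K\to\infty$ uniformly in $j\ge j_0$ (the finitely many $j<j_0$ being irrelevant to the $\limsup$). Combining with the convergence of $\mathrm{(I)}$ gives $\limsup_j H_{\mu_{t_j}}(\xi^n)\le H_\mu(\xi^n)$ for every $n$, hence \eqref{upper-semicontinuous}. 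The only places requiring care are the passage of the Gibbs upper bound to the weak-$\ast$ limit (immediate from clopenness of cylinders) and the absorption step above.
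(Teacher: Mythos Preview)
Your argument is correct and follows the paper's overall scheme: split $H_{\nu_j}(\xi^n)$ into a finite-alphabet part (handled by weak-$\ast$ convergence on clopen cylinders) and a tail controlled uniformly through a Gibbs-type upper bound on cylinder masses, then pass to $\inf_n$. The technical execution differs in two respects. First, you bound $\psi(p)\le\tfrac{1}{e\delta}p^{1-\delta}$ and use the full-product estimate $\mu[w]\le e^{tD_n}\prod_i a_{w_i}^t$, so the tail is dominated by the tail of $(\sum_\ell a_\ell^{\beta})^n$ with $\beta=t(1-\delta)>1$ and only the basic summability \eqref{eqsum} is needed; the paper instead uses monotonicity of $\phi$ on $[0,1/e)$ together with the single-coordinate bound $\mu[w]\le C_{t,n}\,a_{j(w)}^{t}$, which then forces it to invoke both \eqref{eqsum} and \eqref{eqsumt}. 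Second, your treatment of the case $s=\infty$ via the explicit absorption inequality $e^{t_jD_n}a_{w_{j^*}}^{\,t_j-t_*}\le e^{t_*D_n}$ (valid once $\log a_{w_{j^*}}<-D_n$) reduces everything to a fixed parameter $t_*>1$; the paper merely asserts the corresponding limiting cylinder bound without justification. Your route is therefore a bit more self-contained and economical in hypotheses, while the paper's is marginally shorter once \eqref{eqsumt} is taken for granted.
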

	\begin{proof}		
		For any $M\in\N$, write $M^n:=\{w_1\cdots w_n\in\N^n:1\leq w_i\leq M,\,1\leq i\leq n\}$ and define
		\begin{equation*}
		H_{\mu_{t,k_m}}(\xi^n|_M):=\sum_{w\in M^n}\phi(\mu_{t,k_m}([w])),\quad\text{where } \phi(x):=-x\log x.
		\end{equation*}
		Notice that for any fixed $M\in\N$ and $n\in\N$, the above summations are finite summations.
		
		\noindent{\bf Claim:} We have the following uniform estimate:
		\begin{equation*}
			0\leq H_{\mu_{t,k_m}}(\xi^n)-H_{\mu_{t,k_m}}(\xi^n|_M)\leq -nC_{t,n}\sum_{i>M}e^{t\sup f_1|_{[i]}}\log (C_{t,n}e^{t\sup f_1|_{[i]}}),
		\end{equation*}
where $C_{t,n}:=\exp(tC_{bv}(\F)+ntC_{aa}(\F)+t(n-1)\sup f_1-nP(t)).$
			
		\noindent $\lhd$	
		Fix $t>1$ and $n\in\N$. By equations \eqref{gibbs}, \eqref{C2} and $\F$ is almost-additive, for all $w\in\N^n$, 
		\begin{align}\label{important-estimate}
			\mu_{t,k_m}([w])
			&\leq\exp(tC_{bv}(\F)+ntC_{aa}(\F)+t(n-1)\sup f_1+t\sup f_1|_{[j(w)]}-nP(t))\notag\\
			&= C_{t,n}\exp(t\sup f_1|_{[j(w)]}),
		\end{align}
		where $j(w)=\max\{w_i:\,1\leq i\leq n\}$. It is seen that $\phi$ is continuous and increasing on $[0,1/e)$, if $M$ is large such that for all $w\in\N^n\backslash M^n$, $C_{t,n}\exp(t\sup f_1|_{[j(w)]})<1/e$, then
		\begin{align*}
			H_{\mu_{t,k_m}}(\xi^n)-H_{\mu_{t,k_m}}(\xi^n|_M)&=\sum_{w\in\N^n\backslash M^n}\phi(\mu_{t,k_m}([w]))\\
			&\leq-\sum_{w\in\N^n\backslash M^n}C_{t,n}e^{t\sup f_1|_{[j(w)]}}\log (C_{t,n}e^{t\sup f_1|_{[j(w)]}})\\
			&\leq-nC_{t,n}\sum_{i>M}e^{t\sup f_1|_{[i]}}\log (C_{t,n}e^{t\sup f_1|_{[i]}}).
		\end{align*}
		Now the claim holds. \hfill $\rhd$
		
		Now by the claim, for any $n$ and $M\in\N$, we have
		\begin{align*}
			&h(\mu_{t,k_m})=\inf_{n\in\N}\frac{H_{\mu_{t,k_m}}(\xi^n)}{n}\leq \frac{H_{\mu_{t,k_m}}(\xi^n)}{n}\\
			\leq&\frac{H_{\mu_{t,k_m}}(\xi^n)-H_{\mu_{t,k_m}}(\xi^n|_M)}{n}+\frac{H_{\mu_{t,k_m}}(\xi^n|_M)-H_{\mu_{t}}(\xi^n|_M)}{n}+\frac{H_{\mu_{t}}(\xi^n|_M)-H_{\mu_{t}}(\xi^n)}{n}+\frac{H_{\mu_{t}}(\xi^n)}{n}\\
			\leq&-nC_{t,n}\sum_{i>M}e^{t\sup f_1|_{[i]}}\log (C_{t,n}e^{t\sup f_1|_{[i]}})+\frac{H_{\mu_{t,k_m}}(\xi^n|_M)-H_{\mu_{t}}(\xi^n|_M)}{n}+\frac{H_{\mu_{t}}(\xi^n)}{n}.
		\end{align*}
		Fix any $\varepsilon>0$ and $n\in\N$. Since $\F$ is summable, by \eqref{eqsum} and \eqref{eqsumt}, $\sum_{i}e^{t\sup f_1|_{[i]}}<\infty$ and $\sum_{i}e^{t\sup f_1|_{[i]}}(t\sup f_1|_{[i]})<\infty$, hence we can choose $M$ large enough such that \begin{equation*}
			C_{t,n}\exp(t\sup f_1|_{[M]})\leq1/e\quad\text{and}\quad-nC_{t,n}\sum_{i>M}e^{t\sup f_1|_{[i]}}\log (C_{t,n}e^{t\sup f_1|_{[i]}})<\varepsilon.
		\end{equation*}
		Since $[w]$ is open and closed for any $w\in\N^n$, we have $\mu_{t,k_m}([w])\to\mu_{t}([w])$ as $m\to\infty$. So
		\begin{equation*}
			\limsup_{m\to\infty}h(\mu_{t,k_m})\leq\varepsilon+\limsup_{m\to \infty}\frac{H_{\mu_{t,k_m}}(\xi^n|_M)-H_{\mu_{t}}(\xi^n|_M)}{n}+\frac{H_{\mu_t}(\xi^n)}{n}\leq\varepsilon+\frac{H_{\mu_t}(\xi^n)}{n}.
		\end{equation*}
		Hence we have
		\begin{equation*}
			\limsup_{m\to\infty}h(\mu_{t,k_m})\leq\varepsilon+\inf_n\frac{H_{\mu_t}(\xi^n)}{n}=\varepsilon+h(\mu_t).
		\end{equation*}
		Consequently, we see that \eqref{upper-semicontinuous-compact} holds.
		
		If there exists a sequence $(t_k)_{k\in\N}$ with $t_k\to s\in(1,\infty]$ such that $\lim_{k\to\infty}\mu_{t_k}=\mu$. Notice that from equations \eqref{weak-star} and \eqref{important-estimate}, we can take the limit as $m\to\infty$ and at both sides, then we have for large $k$ and $w\in\N^n$,
		\begin{equation*}
			\mu_{t_k}([w])\leq \begin{cases}
				C_{s,n}\exp(s\cdot\sup f_1|_{[j(w)]}),\quad &s\in(1,\infty),\\
				\exp(-[j(w)]),\quad &s=\infty.
			\end{cases}
		\end{equation*}
		So, following the similar proof of \eqref{upper-semicontinuous-compact}, we can obtain that \eqref{upper-semicontinuous} holds.
	\end{proof}
	\begin{proof}[Proof of Theorem \ref{equilibrium state}]
		Let $(k_m)_{m \in \mathbb{N}}$ be a sequence such that equation \eqref{weak-star} holds. Since $(P_{k_m}(t))_{m \in \mathbb{N}}$ forms an increasing sequence, then by Lemma \ref{approximation-lemma} and Lemma \ref{lema_lim_ut}, 
		\begin{equation*}
			\lim_{m \to \infty} P_{k_m}(t) = P(t) \quad \text{and} \quad \limsup_{m \to \infty} \mathcal{F}_*(\mu_{t,k_m}) \leq \mathcal{F}_*(\mu_t).
		\end{equation*}
Combining with Lemma \ref{entropy-continuous-lemma}, we deduce that
		\begin{equation*}
			P(t) = \lim_{m\to \infty} P_{k_m}(t) \leq \limsup_{m \to \infty} h(\mu_{t,k_m}) + \limsup_{m \to \infty} \mathcal{F}_*(\mu_{t,k_m}) \leq h(\mu_t) + \mathcal{F}_*(\mu_t).
		\end{equation*}
		This implies that $\mu_t$ is an equilibrium state for $t\mathcal{F}$ when $t > 1$. 
		
Now we prove the uniqueness of the equilibrium state for each $t>1$. From \eqref{important-estimate}, we obtain (here, $C_t:=\exp\big(tC_{bv}(\F)+tC_{aa}(\F)- P_{1}(t)\big)$)
		$$
		\mu_t[i] \leq C_{t}\exp(\sup(tf_1|_{[i]})).
		$$
		Moreover, since $\F$ is almost-additive, then $-\F$ is almost-additive and for all $\mu\in\M$, 
		\begin{equation*}
			-\F_*(\mu)=\lim\limits_{n\to\infty}\int\frac{-f_n+C_{aa}(\F)}{n}\,d\mu=\inf_{n}\int\frac{-f_n+C_{aa}(\F)}{n}\,d\mu\leq\int(-f_1+C_{aa}(\F))\,d\mu.
		\end{equation*}
		Together with \eqref{eqsum} and \eqref{eqsumt}, for any $t > 1$, we have
		\begin{align*}\label{eq1thm1}
			-t\F_*(\mu_t) 
			&\leq t\int(-f_1+C_{aa}(\F))\,d\mu_t\leq \sum_{i \in \mathbb{N}}t\Big(\sup(-f_1|_{[i]})+C_{aa}(\F)\Big) \mu_t[i]\notag\\
			&\leq tC_tC_{aa}(\F)\sum_{i \in \mathbb{N}}\exp(\sup(tf_1|_{[i]}))+C_t \sum_{i \in \mathbb{N}} \sup(-t f_1|_{[i]}) \exp(\sup(t f_1|_{[i]}))
			< \infty \,. 
		\end{align*}
Since $\sup f_1$ is bounded above, then $\F_*(\mu_t)<\infty$. Therefore, by Proposition \ref{IY-variational principle}, we see that $h(\mu_t), |\mathcal{F}_*(\mu_t)|< \infty$, the uniqueness argument in \cite[Theorem 3.5]{MU2001} implies $\mu_t\equiv\mu_{t\mathcal{F}}$. 
		
		To establish tightness of the family $(\mu_t)_{t>1}$, observe that equation \eqref{mu-upper} combined with $\mu_t = \mu_{t\mathcal{F}}$ permits passage to the limit $k\to\infty$ (potentially by passing to a convergent subsequence of $\mu_{t,k}$), yielding
		\begin{equation*}
			\mu_t([i]) \leq \exp\left(C_{bv}(\mathcal{F}) + \sup f_1|_{[i]} - S\right).
		\end{equation*}
		As the right-hand side is $t$-independent, the tightness criterion in Lemma \ref{lematight} applies. Consequently, the family $(\mu_t)_{t>1}$ admits an accumulation point $\mu_{\infty}$ as $t\to\infty$.
	\end{proof} 
	
		
	\section{Proof of Theorem \ref{relatived-pressure-diff} and its consequences}\label{Sec-proof-Theorem-234}
	In this section, we establish the proof of Theorem \ref{relatived-pressure-diff} and analyze the zero temperature limit and entropy continuity at infinity. As direct consequences of Theorem \ref{relatived-pressure-diff}, we further derive two critical results: Theorem \ref{entropy-continuous}  and Theorem \ref{zero-continuous} . 
	
	The proof strategy begins with the following foundational lemma:
	\begin{lemma}\label{upper-Lyap}
		The maps $H,\, L:(1,\infty)\to\R$ are continuous. Moreover, for any $s\in(1,\infty)$, we have $\lim_{t\to s}\mu_{t\F}=\mu_{s\F}$ .
		
	\end{lemma}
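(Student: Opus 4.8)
The plan is to fix $s\in(1,\infty)$ and an arbitrary sequence $t_k\to s$ (all $t_k$ eventually in a compact interval $[a,b]\subset(1,\infty)$) and to prove that $\mu_{t_k\F}\to\mu_{s\F}$ weakly, $H(t_k)\to H(s)$ and $L(t_k)\to L(s)$; since the sequence and $s$ are arbitrary, this gives continuity of $H$ and $L$ on $(1,\infty)$ together with $\lim_{t\to s}\mu_{t\F}=\mu_{s\F}$. The first step is to record the relevant uniform bounds. Tightness of the whole family $(\mu_{t\F})_{t>1}$ has already been established at the end of the proof of Theorem \ref{equilibrium state} (via the $t$-independent summable bound $\mu_{t\F}([i])\le\exp(C_{bv}(\F)+\sup f_1|_{[i]}-S)$ coming from \eqref{mu-upper}). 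Moreover the estimate on $-t\F_*(\mu_{t\F})$ from that same proof is uniform for $t$ in a compact interval $[a,b]\subset(1,\infty)$: indeed $C_t=\exp(tC_{bv}(\F)+tC_{aa}(\F)-P_1(t))$ stays bounded on $[a,b]$ because $P_1(t)=P_G(t\F|_1)$ is finite and convex, hence continuous, on $(1,\infty)$, while the summability conditions \eqref{eqsum} and \eqref{eqsumt} make $\sum_i\exp(t\sup f_1|_{[i]})$ and $\sum_i(-t\sup f_1|_{[i]})\exp(t\sup f_1|_{[i]})$ bounded uniformly for $t\in[a,b]$; hence there is $M=M(a,b)\ge0$ with $-M\le t\F_*(\mu_{t\F})\le0$ for all $t\in[a,b]$. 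Since $P$ is convex, hence continuous, on $(1,\infty)$ and $P(t)=H(t)+tL(t)$, it follows that $L(t)=\F_*(\mu_{t\F})$ and $H(t)=P(t)-tL(t)$ are bounded on $[a,b]$.

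The second step identifies the subsequential limits. By tightness and Prohorov's theorem, together with the boundedness of $H(t_k)$ and $L(t_k)$, every subsequence of $(t_k)$ has a further subsequence $(t_{k_j})$ along which $\mu_{t_{k_j}\F}\to\mu$ weakly for some $\mu\in\M$, $H(t_{k_j})\to h_0$ and $L(t_{k_j})\to\ell$. Lemma \ref{entropy-continuous-lemma} (case $s\in(1,\infty)$) gives $h_0\le h(\mu)$, and Lemma \ref{lema_lim_ut} gives $\ell\le\F_*(\mu)$; in particular $\F_*(\mu)\ge\ell\ge -M/a>-\infty$. On the other hand, continuity of $P$ yields $P(s)=\lim_j P(t_{k_j})=\lim_j\big(H(t_{k_j})+t_{k_j}L(t_{k_j})\big)=h_0+s\ell\le h(\mu)+s\F_*(\mu)$, while Proposition \ref{IY-variational principle} gives the reverse inequality $h(\mu)+s\F_*(\mu)\le P(s)$. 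Hence all these are equalities: $h_0=h(\mu)<\infty$, $\ell=\F_*(\mu)$, and $P(s)=h(\mu)+s\F_*(\mu)$, so $\mu$ is an equilibrium state for $s\F$ with finite entropy and Lyapunov exponent; by the uniqueness part of Theorem \ref{equilibrium state}, $\mu=\mu_{s\F}$, whence $h_0=h(\mu_{s\F})=H(s)$ and $\ell=\F_*(\mu_{s\F})=L(s)$.

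To conclude, the bounded real sequences $(L(t_k))$ and $(H(t_k))$ have the property that every subsequence admits a further subsequence converging to $L(s)$, resp. $H(s)$, so $L(t_k)\to L(s)$ and $H(t_k)\to H(s)$; likewise, for every bounded continuous $g:\Sigma\to\R$ every subsequence of $(\int g\,d\mu_{t_k\F})$ has a further subsequence converging to $\int g\,d\mu_{s\F}$ (since $\mu_{t_{k_j}\F}\to\mu_{s\F}$ weakly by the second step), so $\int g\,d\mu_{t_k\F}\to\int g\,d\mu_{s\F}$, that is, $\mu_{t_k\F}\to\mu_{s\F}$ weakly. Since $(t_k)$ and $s$ were arbitrary, the lemma follows. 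The main obstacle is that $\F$ is not bounded below, so weak convergence of measures does not by itself control $\int f_n\,d\mu_{t_k\F}$: this is precisely why the upper-semicontinuity Lemmas \ref{lema_lim_ut} and \ref{entropy-continuous-lemma} replace ordinary continuity, and why the uniform-in-$t$ estimates must be extracted carefully from the summability hypothesis — not merely for tightness, but to secure $\F_*(\mu)>-\infty$ for the limit measure, without which Proposition \ref{IY-variational principle} could not be applied to pin down $\mu=\mu_{s\F}$.
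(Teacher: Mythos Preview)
Your proof is correct and follows essentially the same approach as the paper: tightness plus Prohorov, the upper-semicontinuity Lemmas \ref{lema_lim_ut} and \ref{entropy-continuous-lemma}, and uniqueness from Theorem \ref{equilibrium state} to identify any subsequential weak limit as $\mu_{s\F}$. Your simultaneous extraction of the limits $h_0$, $\ell$ together with the limit measure (and the resulting equality chain $P(s)=h_0+s\ell\le h(\mu)+s\F_*(\mu)\le P(s)$, forcing $h_0=H(s)$ and $\ell=L(s)$) is a slightly cleaner packaging than the paper's separate $\limsup/\liminf$ manipulation of $H$ and $L$ via $P=H+tL$, and you are more explicit than the paper about the uniform bound on $|L(t)|$ over compact parameter intervals --- which is precisely what is needed to make $|s-t_n|\cdot|L(t_n)|\to 0$ in the paper's estimate.
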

	\begin{proof}
		At first we have the following:
		
		\smallskip
		\noindent{\bf Claim}: If $s,\,t_n>1$ are such that $\mu_{t_n\F}\to\mu$ as $t_n\to s$, then $\mu=\mu_{s\F}$. Consequently, the weak limit $\lim_{t\to s}\mu_{t\F}$ exists and is $\mu_{s\F}.$
		
		\noindent $\lhd$	
			Fix $s>1$. Assume that $t_n\in(1,\infty)$ is such that $\mu_{t_n\F}\to\mu$ as $t_n\to s$, then by Lemmas \ref{lema_lim_ut} and \ref{entropy-continuous-lemma}, we have
			$$\lim_{n\to \infty}(h(\mu_{t_n\F})+s\F_*(\mu_{t_n\F}))\leq h(\mu)+s\F_*(\mu).$$
			Combine with Proposition \ref{IY-variational principle} and $P$ is convex on $(1,\infty)$,
			\begin{align*}\label{imp-relativized-1}
				h(\mu)+s \F_*(\mu)&\geq\limsup_{n\to\infty}(H(t_n)+s L(t_n))\\
				&=\limsup_{n\to\infty}(P(t_n)+(s-t_n)L(t_n))\geq P(s)-\limsup_{n\to\infty}|s-t_n|\cdot|L(t_n)|.
			\end{align*}
			Observe that $H(t_n),\,|L(t_n)|<\infty$ for any fixed $t_n>1$, then it follows that
			\begin{equation*}
				h(\mu)+s \F_*(\mu)\geq P(s).
			\end{equation*}
			Therefore, $\mu$ is an equilibrium state for $s\F$, and $\mu\equiv\mu_{s\F}$ by Theorem \ref{equilibrium state}. 

Note that $(\mu_{t\F})_{t>1}$ is tight, and hence, the weak limit $\lim_{t\to s}\mu_{t\F}$ exists and is $\mu_{s\F}$. \hfill $\rhd$
		
		By claim, Lemmas \ref{lema_lim_ut} and \ref{entropy-continuous-lemma}, we have 
		\begin{equation}\label{equilibrium-1}
\limsup_{t\to s}L(t)\leq L(s)\quad\text{and}\quad\limsup_{t\to s}H(t)\leq H(s).
		\end{equation}
		On the other hand, for any $t>1$, by Proposition \ref{IY-variational principle}, we have
		\begin{equation}\label{equilibrium-2}
			P(t)=H(t)+tL(t).
		\end{equation}
		Since $P$ is convex on $(1,\infty)$, and by the first inequality of \eqref{equilibrium-1}, we have
		\begin{equation*}
			\liminf_{t\to s}H(t)=P(s)-s\limsup_{t\to s}L(t)\geq P(s)-sL(s)=H(s).
		\end{equation*}
		Combine with the second inequality of \eqref{equilibrium-1}, then $H$ is continuous on $(1,\infty)$. By using \eqref{equilibrium-2}, we conclude that $L$ is continuous on $(1, \infty)$.
	\end{proof}
	\begin{remark}
		Inspired by Lemma \ref{upper-Lyap}, we conjecture that the weak limit $\lim_{t\to\infty}\mu_{t\F}$ exists.
	\end{remark}
	\begin{proof}[Proof of Theorem \ref{relatived-pressure-diff}]
		By Theorem \ref{equilibrium state}, let $\mu_{(s+t)\F},\mu_{s\F}\in\mathcal{M}$ be equilibrium states respectively of $(s+t)\F$ and $s\F$ (here, we assume that $s+t>1,s>1$). 
		
		(1) By Proposition \ref{IY-variational principle}, we have that
		\begin{align*}
			\left\{
			\begin{aligned}		P(s+t)-P(s)&\geq H(s)+(s+t)L(s)-P(s)=tL(s),\\
				P(s+t)-P(s)&\leq P(s+t)-H(s+t)-sL(s+t)=tL(s+t).
			\end{aligned}
			\right.
		\end{align*}
		This yields the inequalities
		\begin{equation}\label{relatived-diff}
			\left\{
			\begin{aligned}
				L(s)\leq\frac{P(s+t)-P(s)}{t}\leq L(s+t),\ \ \ \text{if}\ t>0,\\
				L(s)\geq\frac{P(s+t)-P(s)}{t}\geq L(s+t),\ \ \ \text{if}\ t<0.
			\end{aligned}
			\right.
		\end{equation}
		By Lemma \ref{upper-Lyap}, $L$ is continuous on $(1,\infty)$, hence it follows from \eqref{relatived-diff} that
		\begin{equation*}
			P'(s)=\lim_{t\to0}\frac{P(s+t)-P(s)}{t}=L(s).
		\end{equation*}
		
		Now we show that $P$ is strictly convex on $(1,\infty)$. If otherwise. Since the function $P$ is $C^1$ and convex on $(1,\infty)$, then there exist $1< t_1<t_2$ such that 
		\begin{align}\label{p} 
			L(t_1)=L(t_2)=L(t),\ \forall\ t\in(t_1,t_2).
		\end{align}
		By Proposition \ref{IY-variational principle}, we have
		\begin{align*}
			P(t_1)=H(t_1)+t_1L(t_1);\quad
			P(t_2)=H(t_2)+t_2L(t_2)=H(t_2)+t_2L(t_1).
		\end{align*}
		Noting that $P'(s)=L(s)$ for all $s\in(1,\infty)$, there exists $\xi\in(t_1,t_2)$ such that
		\begin{equation}\label{p-3}
			\frac{H(t_1)-H(t_2)}{t_1-t_2}+L(t_1)=\frac{P(t_1)-P(t_2)}{t_1-t_2}=P'(\xi)=L(\xi).
		\end{equation}
		According to \eqref{p} and \eqref{p-3}, we conclude that $H(t_1)=H(t_2)$, this means that $$P(t_2)=H(t_1)+t_2L(t_1).$$ Hence $\mu_{t_1}$ is the equilibrium state for $t_2\F$, it contradicts with Theorem \ref{equilibrium state}.
		
		(2) By Lemma \ref{upper-Lyap}, the maps $-L,\,H$ are continuous on $(1,\infty)$, and the weak limit $\lim_{t\to s}\mu_{t\F}=\mu_{s\F}$ for any $s\in(1,\infty)$.
		
		Noting that $P'=L$ and $P$ is strictly convex, then $-L$ is strictly decreasing on $(1,\infty)$. Assume $1<s<t$, then there exists $\xi\in(s,t)$ such that
		\begin{align*}
			H(s)-H(t)&=P(s)-sL(s)-(P(t)-tL(t))=P'(\xi)(s-t)-sL(s)+tL(t)\\
			&=L(\xi)(s-t)-sL(s)+tL(t)
			=s(L(\xi)-L(s))+t(L(t)-L(\xi))>0.
		\end{align*}
		So $H$ is strictly decreasing on $(1,\infty)$. 
	\end{proof}
	\begin{remark}
		The uniqueness of equilibrium states in the proof of Theorem \ref{relatived-pressure-diff} is the key point where the
		Theorem \ref{equilibrium state} is needed. It is interesting to know if it is possible to use the uniqueness of equilibrium states to obtain that the pressure $P$ is $C^1$ on $(1,\infty)$.
	\end{remark}
	As two consequences of Theorem \ref{relatived-pressure-diff}, we now proceed to prove Theorems \ref{entropy-continuous} and \ref{zero-continuous}.
	\begin{proof}[Proof of Theorem \ref{entropy-continuous}]
		By Theorem \ref{equilibrium state},	let $\mu_{\infty}\in\M$ be an accumulation point for the equilibrium state family $(\mu_{t\F})_{t>1}$. Lemma \ref{lema_lim_ut} means that the map $ \F_*:\mathcal{M}\to\R$ is upper semicontinuous, then 
		\begin{equation}\label{L-continuous-infty}
			\liminf_{t \to \infty}L(t)=\liminf\limits_{t\to\infty}\F_*(\mu_{t\F})\leq\limsup\limits_{t\to\infty}\F_*(\mu_{t\F})\leq \F_*(\mu_{\infty}).
		\end{equation}
		
		By Proposition \ref{IY-variational principle} and Theorem \ref{relatived-pressure-diff} (1), for any $\mu\in\M$, we have
		\begin{equation*}
			\F_*(\mu)\leq\limsup\limits_{t\to\infty}\left(\frac{P(t)}{t}-\frac{h(\mu)}{t}\right)=\limsup_{t \to \infty}P'(t)=\limsup_{t \to \infty}L(t).
		\end{equation*}
		This implies that
		\begin{equation}\label{maximizing}
			\F_*(\mu_{\infty})\leq\sup\{\F_*(\mu):\mu\in\mathcal{M}\}\leq\limsup\limits_{t\to\infty}L(t).
		\end{equation}
		Now Theorem \ref{entropy-continuous} just follows from \eqref{L-continuous-infty} and \eqref{maximizing}.
	\end{proof}
	\begin{proof}[Proof of Theorem \ref{zero-continuous}]
		By Theorem \ref{relatived-pressure-diff} (2), the map $H$ is strictly decreasing, this combines with Lemma \ref{entropy-continuous-lemma}, we have
		\begin{equation}\label{zero-1}
			\liminf_{t\to\infty}H(t)=\liminf_{t\to\infty}h(\mu_t)\leq\limsup_{t\to\infty}h(\mu_t)\leq h(\mu_{\infty}).
		\end{equation}
		
		By Proposition \ref{IY-variational principle} and Theorem \ref{entropy-continuous} ($\mu_{\infty}$ is a maximising measure for $\F$), we have,
		\begin{equation*}
			h(\mu_{\infty})\leq P(t)-t \F_*(\mu_{\infty})\leq P(t)-tL(t)=H(t),\ \ \text{for all}\ t>1. 
		\end{equation*}
		Letting $t\to\infty$ and taking the supremum for $\mu\in\M_{\max}(\F)$ on both sides, we have 
		\begin{equation}\label{zero-2}
			h(\mu_{\infty})\leq\sup\{h(\mu):\mu\in\M_{\max}(\F)\}\leq \limsup_{t \to \infty}H(t). 
		\end{equation}
		Then Theorem \ref{zero-continuous} follows directly from \eqref{zero-1} and \eqref{zero-2}.
	\end{proof}
	
	\noindent{\bf Acknowledgement}.
	The author J. Cao would like to thank Yanhui Qu for his guidance and support. J. Cao was partially supported by Nankai Zhide Foundation.
	\appendix 
	\section{The construction of the Gibbs state in compact setting}\label{appendix}
	
	In this Appendix, we recall the construction of the Gibbs state in compact setting, see \cite{Ba2006,Mu2006} for more details. Here, we assume that $(\Sigma,\sigma)$ is a compact topologically mixing shift, and the potential $\F=\{f_n\}_{n=1}^{\infty}$ is almost-additive and has bounded variation. 
	
	At first, define the {\it topological pressure} of $\F$ on $\Sigma$ as 
	\begin{equation*}
		P_{top}(\F):=\lim_{n\to\infty}\frac{1}{n}\log\sum_{|w|=n}\exp(\sup_{x\in[w]}(f_n(x))).
	\end{equation*}
Moreover, we have the following:
	\begin{equation}\label{pressure}
		P_{top}(\F)=\inf_{n}\frac{1}{n}\log\sum_{|w|=n}\exp(\sup_{x\in[w]}(f_n(x))).
	\end{equation}
	According to \cite[Theorem 1 and Theorem 2]{Ba2006}, we see that $P_{G}(\F)=P_{top}(\F)$. So, in compact setting we have that the Gurevic pressure is the same as the topological pressure, and the pressure can be read in any way.
	
	Now	let us recall that the construction of the Gibbs state $\mu$ on compact $\Sigma$ (see \cite[p. 451]{Mu2006}, see also \cite{Ba2006}). At first, for any admissible word $w$ such that $|w|=n$, define
	\begin{equation*}
		\nu_n([w]):=\frac{\sup_{x\in[w]}\exp( f_n(x))}{\sum_{[w]=n}\sup_{x\in[w]}\exp( f_n(x))},
	\end{equation*}
	and hence we have (use \eqref{pressure} and the fact: $P_{G}(\F)=P_{top}(\F)$)
	\begin{align*}
		\frac{\nu_n([w])}{\exp( f_n(x)-P_{G}(\F))}&=\frac{\sup_{x\in[w]}\exp( f_n(x))}{\exp( f_n(x))}\cdot\frac{\exp(nP_{top}(\F))}{\sum_{[w]=n}\sup_{x\in[w]}\exp( f_n(x))}\notag\\
		&\leq \frac{\sup_{x\in[w]}\exp( f_n(x))}{\exp( f_n(x))}\leq \exp(C_{bv}(\F)),\ \ \forall x\in[w].
	\end{align*}
	There exists a subsequence $\{\nu_{n_k}\}$ that converges in the weak-star topology to a
	probability measure $\nu$. At last, the Gibbs state $\mu$ is a limit point of a subsequence of
	\begin{equation*}
		\left\{\frac{1}{m}\left(\nu+\nu\circ\sigma^{-1}+\cdots+\nu\circ\sigma^{-(m-1)}\right)\right\}
	\end{equation*}
	in the weak-star topology, see also \cite[Lemma 16 and Lemma 17]{Mu2006}. Now we conclude that 
	\begin{equation*}
		\frac{\mu[x_1 \ldots x_{n}]}{\exp( f_n(x) - nP_{G}(\F))}\leq\exp(C_{bv}(\F)),\ \ \text{for all}\ x\in\Sigma,\, n\in\N.
	\end{equation*}
	
	\bibliographystyle{siam}
	\bibliography{refs.bib}
	
\end{document}